\newcommand \brk[1]
\newtheorem{theorem}{Theorem}%[section]
\newtheorem{lemma}{Lemma}
\newtheorem{false statement}{False statement}
\theoremstyle{definition}
\newtheorem{conjecture}{Conjecture}
\newtheorem{problem}{Problem}
\newcommand{\T}{\mathcal{T}}
\newcommand \equ[2]
\newcommand \aln[2]
\newcommand \eqn[2]
\newcommand \notwant[1] {}
\begin{document}

\title{Counting spanning trees in a complete bipartite graph which contain a given spanning forest}
\date{}

\author{
Fengming Dong$^{1}$\thanks{Email: fengming.dong@nie.edu.sg and donggraph@163.com.} \ and
Jun Ge$^{2}$\thanks{Corresponding author. Email: mathsgejun@163.com.}
\\[2mm]
\small $^{1}$National Institute of Education, Nanyang Technological University, Singapore \\
\small $^{2}$School of Mathematical Sciences \& Laurent Mathematics Center, Sichuan Normal University, China
}

\maketitle

\begin{abstract}
In this article, we extend Moon's classic formula
for counting spanning trees in complete graphs
containing a fixed spanning forest
to complete bipartite graphs.
Let $(X,Y)$ be the bipartition of
the complete bipartite graph $K_{m,n}$
with $|X|=m$ and $|Y|=n$.
We prove that  for any given spanning forest $F$
of $K_{m,n}$ with components $T_1,T_2,\ldots,T_k$,
the number of spanning trees in $K_{m,n}$
which contain all edges in $F$ is equal to
$$
\frac 1{mn}\left(\prod_{i=1}^k (m_in+n_im)\right)
\left (1-\sum_{i=1}^{k}\frac{m_in_i}{m_in+n_im}\right ),
$$ \label{r1-1}
where $m_i=|V(T_i)\cap X|$ and $n_i=|V(T_i)\cap Y|$
for $i=1,2,\ldots,k$.
\end{abstract}

\medskip

\noindent {\bf Keywords:} spanning tree; multigraph; weighted graph;
complete bipartite graph

\smallskip
\noindent {\bf Mathematics Subject Classification (2010): 05C30, 05C05}

\section{Introduction}\label{sec1}

In this paper, we assume that all graphs are loopless, while parallel edges are allowed.
For any graph $G$, let $V(G)$ and $E(G)$ be the vertex set
and edge set of $G$.
For any edge set $F\subseteq E(G)$,
let $G/F$ be the graph obtained from $G$ by
contracting all edges in $F$, and removing all loops.
Let $\T(G)$ denote the set of spanning trees of $G$.
For any positive integer $k$,
let $\brk{k}$ denote the set
$\{1,2,\cdots,k\}$.

Suppose $G$ is a weighted graph with weight function
$\omega: E(G)\rightarrow \mathbb{R}$.
For any $F\subseteq E(G)$ and any subgraph $H$ of $G$,
define $\omega(F)=\prod\limits_{e\in F} \omega(e)$ and $\omega(H)=\omega(E(H))$.
Let $\tau(G,\omega)=\sum\limits_{T\in \mathcal{T}(G)} \omega(T)$.
Sometimes we use $\tau(G)$ instead of $\tau(G,\omega)$ when there is no confusion.
It is obvious that for an unweighted graph $G$
(that is to say, a weighted graph with unit weight on
each edge), $\tau(G)=|\mathcal{T}(G)|$, i.e., the number of spanning trees of $G$.
Throughout this paper, any graph is assumed to be unweighted,
unless it is claimed.

Counting spanning trees in graphs is a very old topic in graph theory
having modern connections with many other fields in mathematics,
statistical physics and theoretical computer science, such as random walks,
the Ising model and Potts model, network reliability, parking functions,
knot/link determinants. See \cite{DGO, DY, Ehrenborg, GJ, Yan} for some recent
work on counting spanning trees.

For a subgraph $H$ of $G$,
let $\mathcal{T}_{H}(G)$ denote
the set of spanning trees $T\in \T(G)$
with $E(H)\subseteq E(T)$,
and let
$\tau_{H}(G)=\sum\limits_{T\in \mathcal{T}_{H}(G)} \omega(T)$.
For unweighted graph $G$, $\tau_{H}(G)=|\mathcal{T}_{H}(G)|$,
i.e., the number of spanning trees of $G$ containing all edges in $H$.
Note that usually the graph $H$ here is a forest or a tree,
because otherwise
$\mathcal{T}_{H}(G)=\emptyset$ and $\tau_{H}(G)=0$.

The celebrated Cayley's formula \cite{Cayley} states that $\tau(K_n)=n^{n-2}$.
In 1964, Moon generalized Cayley's formula by
obtaining a nice expression of $\tau_F(K_n)$ for any spanning forest $F$ of $K_n$. %containing a given spanning forest.

\begin{theorem}[\cite{Moon}, also see Problem 4.4 in \cite{Lovasz}]\label{Moon}
For any spanning forest $F$ of $K_n$, if $c$ is the number of components
of $F$ and $n_1, n_2, \ldots, n_c$ are the orders of those components, then
$$\tau_{F}(K_n)=n^{c-2}\prod_{i=1}^c n_i.$$
\end{theorem}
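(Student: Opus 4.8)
The plan is to reduce the problem to a spanning-tree count in a small weighted multigraph via edge contraction, and then evaluate that count with the Matrix--Tree theorem.

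\medskip
\noindent\emph{Step 1: Contraction.} Since $F$ is a forest, every spanning tree $T\in\mathcal{T}_F(K_n)$ is obtained by adding to $F$ a set of edges joining its components, and the map $T\mapsto E(T)\setminus E(F)$ is a bijection from $\mathcal{T}_F(K_n)$ onto $\mathcal{T}(K_n/F)$; hence $\tau_F(K_n)=\tau(K_n/F)$. Contracting each component $T_i$ to a single vertex $v_i$, the graph $K_n/F$ is a loopless multigraph on the $c$ vertices $v_1,\dots,v_c$ in which the number of parallel edges between $v_i$ and $v_j$ equals the number of edges of $K_n$ with one end in $V(T_i)$ and the other in $V(T_j)$, namely $n_in_j$. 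Equivalently, $K_n/F$ is the weighted complete graph $K_c$ with $\omega(\{i,j\})=n_in_j$, and
\[
\tau_F(K_n)=\tau(K_n/F)=\sum_{T\in\mathcal{T}(K_c)}\ \prod_{\{i,j\}\in E(T)} n_in_j .
\]

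\noindent\emph{Step 2: The weighted Laplacian.} For a spanning tree $T$ of $K_c$ one has $\prod_{\{i,j\}\in E(T)}n_in_j=\prod_{i=1}^c n_i^{\deg_T(i)}$, so the weight of $T$ depends only on its degree sequence. I would evaluate the sum above using the weighted Matrix--Tree theorem. Writing $\mathbf{n}=(n_1,\dots,n_c)^{\mathsf T}$ and using $\sum_{j\neq i}n_in_j=n_i(n-n_i)$, the weighted Laplacian of $K_c$ is
\[
L=n\,\mathrm{diag}(n_1,\dots,n_c)-\mathbf{n}\,\mathbf{n}^{\mathsf T}.
\]

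\noindent\emph{Step 3: A cofactor via the rank-one update.} Deleting the last row and column of $L$ yields $M=n\,\mathrm{diag}(n_1,\dots,n_{c-1})-\mathbf{n}'\,\mathbf{n}'^{\mathsf T}$ with $\mathbf{n}'=(n_1,\dots,n_{c-1})^{\mathsf T}$. By the matrix determinant lemma,
\[
\det M=n^{c-1}\Big(\prod_{i=1}^{c-1}n_i\Big)\Big(1-\tfrac1n\sum_{i=1}^{c-1}n_i\Big)=n^{c-1}\Big(\prod_{i=1}^{c-1}n_i\Big)\frac{n_c}{n}=n^{c-2}\prod_{i=1}^{c}n_i,
\]
where I used $\sum_{i=1}^{c-1}n_i=n-n_c$. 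The Matrix--Tree theorem then gives $\tau_F(K_n)=\det M=n^{c-2}\prod_{i=1}^{c}n_i$, as claimed.

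\medskip
\noindent\emph{Main obstacle and an alternative.} The only genuinely delicate points are the bijective identity $\tau_F(K_n)=\tau(K_n/F)$ (carefully tracking how the parallel edges encode the weight $n_in_j$) and the clean evaluation of the cofactor; the rank-one structure $L=n\,\mathrm{diag}(\mathbf{n})-\mathbf{n}\,\mathbf{n}^{\mathsf T}$ is precisely what makes the determinant collapse. A purely combinatorial alternative avoids the determinant: by the degree-refined Cayley formula (a multinomial identity proved through Pr\"ufer sequences), $\sum_{T\in\mathcal{T}(K_c)}\prod_{i=1}^c x_i^{\deg_T(i)}=\big(\sum_{i=1}^c x_i\big)^{c-2}\prod_{i=1}^c x_i$; substituting $x_i=n_i$ together with $\sum_{i=1}^c n_i=n$ into the sum from Step~1 reproduces $n^{c-2}\prod_{i=1}^c n_i$ immediately.
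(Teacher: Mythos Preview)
Your proof is correct, but the paper does not actually supply its own proof of this statement: Theorem~\ref{Moon} is quoted from Moon~\cite{Moon} (and Lov\'asz~\cite{Lovasz}) as background, with no argument given. So there is nothing in the paper to compare against directly.

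That said, it is worth noting that your Step~1 is exactly the mechanism the paper exploits for its main result on $K_{m,n}$: the identity $\tau_F(G)=\tau(G/F)$ together with the observation that the contracted graph is a weighted complete graph on the components of $F$. Where you then diverge is in the evaluation of $\tau(K_c,\omega)$. You use the Matrix--Tree theorem and the rank-one structure of the Laplacian (or, alternatively, the degree-refined Cayley formula); the paper, for the bipartite weight pattern $\omega(v_iv_j)=m_in_j+m_jn_i$, instead builds a recursive identity (Lemma~\ref{le4-1}) and an algebraic identity for the function $\phi$ (Theorem~\ref{th3-1}). Your Matrix--Tree route is shorter here precisely because the weight $\omega(v_iv_j)=n_in_j$ factors, giving the Laplacian the clean form $n\,\mathrm{diag}(\mathbf n)-\mathbf n\,\mathbf n^{\mathsf T}$; the bipartite weights $m_in_j+m_jn_i$ do not factor, which is why the paper needs its more elaborate machinery.
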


It is easy to see that Cayley's formula is the special case
that $F$ is an empty graph.
It is also well known that $\tau(K_{m, n})=m^{n-1}n^{m-1}$
for any complete bipartite graph $K_{m, n}$
by Fiedler and Sedl\'{a}\v{c}ek \cite{FS}. So there is
a natural question: is there a bipartite analogue of
Moon's formula (Theorem \ref{Moon})?
That is to say,
for any given spanning forest $F$ in $K_{m,n}$,
what is the explicit expression of $\tau_F(K_{m, n})$?

It turns out that this question is much harder than the case of complete graphs.
In \cite{GD}, this question was partially answered for two special cases:
$F$ is a matching or a tree plus several possible isolated vertices.

\begin{theorem}[\cite{GD}]\label{matching}
For any matching $M$ of size $k$ in $K_{m, n}$,
$$\tau_M(K_{m, n})=(m+n)^{k-1}(m+n-k)m^{n-k-1}n^{m-k-1}.$$
\end{theorem}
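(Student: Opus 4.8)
The plan is to compute $\tau_M(K_{m,n})$ directly, by contracting the matching and applying the Matrix--Tree theorem to the resulting multigraph, and then to confirm the answer matches the general formula of the abstract. Since a matching is a forest, contracting the $k$ independent edges of $M$ yields a bijection between spanning trees of $K_{m,n}$ containing $M$ and spanning trees of $K_{m,n}/M$, so I would begin from $\tau_M(K_{m,n})=\tau(K_{m,n}/M)$. Writing $(X,Y)$ for the bipartition and $M=\{x_1y_1,\dots,x_ky_k\}$, the graph $G'=K_{m,n}/M$ has $m+n-k$ vertices: the $k$ merged vertices $w_1,\dots,w_k$ (each $w_i$ identifying $x_i$ with $y_i$), the $m-k$ surviving vertices of $X$, and the $n-k$ surviving vertices of $Y$. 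A short edge count shows $G'$ carries a double edge between every pair $w_i,w_j$, a single edge between every remaining adjacent pair, and no edge between two pure $X$-vertices or two pure $Y$-vertices.

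Next I would read the spanning-tree count off the Laplacian eigenvalues. Ordering the vertices as $(W,X',Y')$, the Laplacian $L(G')$ is built from identity and all-ones blocks: the $W$-block is $(m+n)I_k-2J_k$, the $X'$-block is $nI_{m-k}$, the $Y'$-block is $mI_{n-k}$, and the three off-diagonal blocks are (rectangular) negated all-ones matrices. Zero-sum vectors supported on $X'$, on $Y'$, and on $W$ are eigenvectors with eigenvalues $n$, $m$, and $m+n$ of multiplicities $m-k-1$, $n-k-1$, and $k-1$; together with the all-ones vector these exhaust all but two eigenvalues. The last two lie in the span of the three block-indicator vectors, on which $L(G')$ acts as an explicit $3\times3$ matrix $M_3$. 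As $M_3$ has $0$ as one eigenvalue, the other two have sum $\operatorname{tr}M_3=2(m+n-k)$ and product equal to the sum of the $2\times2$ principal minors of $M_3$, which collapses to $(m+n-k)^2$; hence both equal $m+n-k$.

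Multiplying the nonzero eigenvalues and dividing by $|V(G')|=m+n-k$ then gives
\[
\tau(K_{m,n}/M)=\frac{n^{m-k-1}\,m^{n-k-1}\,(m+n)^{k-1}\,(m+n-k)^2}{m+n-k}=(m+n)^{k-1}(m+n-k)\,m^{n-k-1}n^{m-k-1},
\]
as claimed. As an independent check I would specialize the abstract's formula: the matching forest has $k$ edge-components with $(m_i,n_i)=(1,1)$, $m-k$ isolated $X$-vertices with $(1,0)$, and $n-k$ isolated $Y$-vertices with $(0,1)$, so $\prod_i(m_in+n_im)=(m+n)^k n^{m-k}m^{n-k}$ and $\sum_i\frac{m_in_i}{m_in+n_im}=\frac{k}{m+n}$; substituting, the general formula reduces to $\frac{1}{mn}(m+n)^k n^{m-k}m^{n-k}\cdot\frac{m+n-k}{m+n}$, the same closed form.

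The main obstacle is the eigenvalue bookkeeping in the middle step: assembling the block Laplacian and, above all, proving that the two ``mixed'' eigenvalues coincide at $m+n-k$ (equivalently, that $M_3$ has a repeated nonzero eigenvalue). I would also handle the boundary ranges $m=k$ or $n=k$ separately, where one side of the bipartition is entirely consumed by $M$, some multiplicities drop to zero, and the exponents $m-k-1$ or $n-k-1$ turn negative; in those cases the count would be settled by direct inspection of the reduced graph rather than by the generic eigenvalue formula.
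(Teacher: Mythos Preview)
The paper does not actually prove Theorem~\ref{matching}: it is quoted from \cite{GD} as prior work and listed among the special cases that motivated the main result. So there is no ``paper's own proof'' to compare against. That said, your argument is correct. The contracted multigraph $K_{m,n}/M$ has exactly the block Laplacian you describe; the eigenvalues $n$, $m$, $m+n$ on the zero-sum subspaces of $X'$, $Y'$, $W$ appear with the stated multiplicities; and the $3\times 3$ matrix on the span of the block indicators has trace $2(m+n-k)$ and sum of principal $2\times 2$ minors
\[
n(m{+}n{-}2k)-k(m{-}k)\;+\;m(m{+}n{-}2k)-k(n{-}k)\;+\;mn-(m{-}k)(n{-}k)=(m+n-k)^2,
\]
so its two nonzero eigenvalues are both $m+n-k$, and the Matrix--Tree formula gives the claimed closed form.

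In relation to the paper, your \emph{second} derivation --- specializing the main formula~(\ref{main}) with $k$ components of type $(1,1)$, $m-k$ of type $(1,0)$, and $n-k$ of type $(0,1)$ --- is exactly the route by which Theorem~\ref{matching} becomes a corollary here once Theorem~\ref{mainthm} is proved, and your computation of $\prod_i(m_in+n_im)=(m+n)^k n^{m-k}m^{n-k}$ and $\sum_i\frac{m_in_i}{m_in+n_im}=\frac{k}{m+n}$ is correct. The paper's proof of the general theorem, however, avoids the Matrix--Tree theorem entirely: it proceeds by an inclusion--exclusion recursion on $\tau(K_k,\omega)$ (Lemma~\ref{le4-1}) together with an algebraic identity for the polynomial $\phi$ (Theorem~\ref{th3-1}). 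Your spectral approach is therefore a genuinely different, self-contained proof of the matching case; it is cleaner for this special structure but does not obviously scale to an arbitrary spanning forest, which is precisely why the paper develops the $\phi$-machinery. Your caveat about the degenerate ranges $m=k$ or $n=k$ is well taken: there the corresponding indicator block is absent and $M_3$ collapses to a $2\times 2$ matrix, but the identity still holds after the obvious adjustments.
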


\begin{theorem}[\cite{GD}]\label{tree}
For any tree $T$ of $K_{m, n}$,
$$
\tau_T(K_{m, n})=(sn+tm-st)m^{n-t-1}n^{m-s-1},
$$
where $s=|V(T)\cap X|$, $t=|V(T)\cap Y|$,
and $(X, Y)$ is the bipartition
of $K_{m, n}$ with $|X|=m$ and $|Y|=n$.
\end{theorem}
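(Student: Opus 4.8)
The plan is to pass to the contraction $K_{m,n}/F$ and evaluate its spanning-tree count by the (weighted) Matrix-Tree theorem. First I would use the standard bijection between the spanning trees of $K_{m,n}$ that contain the forest $F$ and the spanning trees of $K_{m,n}/F$, which gives $\tau_F(K_{m,n})=\tau(K_{m,n}/F)$. Since $K_{m,n}$ is complete bipartite, the number of edges joining $V(T_i)$ to $V(T_j)$ equals $m_in_j+n_im_j$, so $K_{m,n}/F$ is the multigraph on $v_1,\dots,v_k$ with $m_in_j+n_im_j$ parallel edges between $v_i$ and $v_j$; equivalently it is $K_k$ with edge weights $w_{ij}=m_in_j+n_im_j$ (some possibly $0$), and it remains to compute $\tau(K_k,w)$.

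The key observation is that the weighted Laplacian $L$ of $(K_k,w)$ has a clean low-rank form built from the very quantities appearing in the answer. Put $\mathbf a=(m_1,\dots,m_k)^{\top}$, $\mathbf b=(n_1,\dots,n_k)^{\top}$, $D_i:=m_in+n_im$, and note $\sum_i m_i=m$, $\sum_i n_i=n$. Since $(\mathbf a\mathbf b^{\top}+\mathbf b\mathbf a^{\top})_{ij}=w_{ij}$ for $i\ne j$ while its diagonal entries $2m_in_i$ are exactly what turns the weighted degree $D_i-2m_in_i$ of $v_i$ into $D_i$, one gets
\[
L \;=\; \Delta-\mathbf a\mathbf b^{\top}-\mathbf b\mathbf a^{\top},\qquad \Delta:=\mathrm{diag}(D_1,\dots,D_k).
\]
Now I would invoke the standard identity $\tau(K_k,w)=\frac1{k^2}\det(L+J)$ with $J=\mathbf 1\mathbf 1^{\top}$, write $L+J=\Delta+UCU^{\top}$ where $U$ is the $k\times3$ matrix with columns $\mathbf a,\mathbf b,\mathbf 1$ and $C=\left(\begin{smallmatrix}0&-1&0\\-1&0&0\\0&0&1\end{smallmatrix}\right)$, and apply the matrix determinant lemma; because $C^2=I_3$ and $\det C=-1$ this gives
\[
\det(L+J)=\det(\Delta)\,\det(C)\,\det\!\bigl(C+U^{\top}\Delta^{-1}U\bigr)=-\Bigl(\textstyle\prod_{i}D_i\Bigr)\det\!\bigl(C+U^{\top}\Delta^{-1}U\bigr),
\]
so everything reduces to one explicit $3\times3$ determinant whose entries are (shifted by the entries of $C$) the weighted sums $p=\sum_i m_i^2/D_i$, $q=\sum_i n_i^2/D_i$, $r=\sum_i m_in_i/D_i$, $s=\sum_i m_i/D_i$, $t=\sum_i n_i/D_i$, $u=\sum_i 1/D_i$.

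To finish, I would use the elementary identities $nm_i^2+m\,m_in_i=m_iD_i$, $n\,m_in_i+mn_i^2=n_iD_i$, $nm_i+mn_i=D_i$; summing over $i$ yields
\[
np+mr=m,\qquad nr+mq=n,\qquad ns+mt=k,
\]
so $p=\tfrac{m(1-r)}{n}$, $q=\tfrac{n(1-r)}{m}$ and $pq=(1-r)^2$. Expanding $\det(C+U^{\top}\Delta^{-1}U)$, the coefficient of $u+1$ is $pq-(r-1)^2=0$ (so $u$ drops out), and the remainder collapses via the third relation to $-(pt^2+2(1-r)st+qs^2)=-\tfrac{1-r}{mn}(mt+ns)^2=-\tfrac{k^2(1-r)}{mn}$. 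Substituting back gives $\tau_F(K_{m,n})=\tfrac1{mn}\bigl(\prod_i D_i\bigr)(1-r)$, which is the claimed formula. The part needing the right idea is recognizing the low-rank form of $L$ above; after that there is no real obstacle, apart from the bookkeeping point that a component $T_i$ may be a single vertex (so $m_i=0$ or $n_i=0$) — but since $m,n\ge1$ we always have $D_i\ge1$, so $\Delta$ is invertible and everything goes through. As a sanity check, taking $F$ to be a matching recovers Theorem~\ref{matching}, and taking $F$ to be a tree plus isolated vertices recovers Theorem~\ref{tree}.
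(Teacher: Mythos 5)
Your argument is correct, and it is both more general and genuinely different in method from anything in the paper. Note first that the paper does not prove Theorem~\ref{tree} at all: it is quoted from \cite{GD}, and the closest the paper comes is its proof of the general Theorem~\ref{mainthm}, of which Theorem~\ref{tree} is the special case where $F$ is the tree $T$ together with $m-s$ isolated vertices of $X$ and $n-t$ isolated vertices of $Y$; the singleton components contribute $n^{m-s}m^{n-t}$ to the product and nothing to the sum, while the tree component contributes $(sn+tm)$ and $\tfrac{st}{sn+tm}$, so the general formula collapses to $(sn+tm-st)m^{n-t-1}n^{m-s-1}$ exactly as you note. You in fact prove Theorem~\ref{mainthm} in full. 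The paper's route is combinatorial: an inclusion--exclusion recursion for $\tau(K_k,\omega)$ (Lemma~\ref{le4-1}), an induction showing $\tau(K_k,\omega)=\phi$ for weights of the form $x_iy_j+x_jy_i$ (Theorem~\ref{th3}), and, as the real workhorse, the identity of Theorem~\ref{th3-1}, whose proof occupies Sections~\ref{sec2} and~\ref{sec3}. Your route replaces all of that with one application of the weighted Matrix--Tree theorem: the key observation that the Laplacian of $K_{m,n}/F$ equals $\Delta-\mathbf a\mathbf b^{\top}-\mathbf b\mathbf a^{\top}$ (the diagonal correction $2m_in_i$ works out exactly), followed by $\tau=\tfrac1{k^2}\det(L+J)$ and the matrix determinant lemma, reducing everything to a $3\times3$ determinant that collapses via the relations $np+mr=m$, $nr+mq=n$, $ns+mt=k$. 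I checked these reductions and the final expansion; they are correct, and the invertibility of $\Delta$ is properly justified since $D_i\ge\min(m,n)\ge1$ even for singleton components. What the paper's approach buys is an elementary, determinant-free derivation and the identity (\ref{e3-4}), which may be of independent interest; what yours buys is a much shorter proof and, implicitly, a template for Problem~\ref{prob2}, since the contracted Laplacian of a complete multipartite graph is again diagonal plus low rank. The only cosmetic flaw is that you reuse the letters $s,t$ for $\sum_i m_i/D_i$ and $\sum_i n_i/D_i$ while the statement already uses them for $|V(T)\cap X|$ and $|V(T)\cap Y|$.
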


In this paper, we obtain an explicit expression
for $\tau_F(K_{m,n})$ for an arbitrary spanning forest $F$
of $K_{m,n}$.

\begin{theorem} \label{mainthm}
Let $(X, Y)$ be the bipartition
of $K_{m, n}$ with $|X|=m$ and $|Y|=n$.
For any spanning forest $F$ of $K_{m,n}$
with components $T_1,T_2,\ldots,T_k$,
\equ{main}
{
\tau_{F}(K_{m,n})=
\frac 1{mn}\left(\prod_{i=1}^k (m_in+n_im)\right)
\left (1-\sum_{i=1}^{k}\frac{m_in_i}{m_in+n_im}\right ),
}
where $m_i=|X\cap V(T_i)|$ and $n_i=|Y\cap V(T_i)|$
for all $i\in \brk{k}$.
\end{theorem}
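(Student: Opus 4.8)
The plan is to prove the formula by a weighted generalisation combined with a deletion--contraction / induction on the number $k$ of components of $F$. The natural setting is the following: contracting each tree $T_i$ of $F$ to a single vertex turns the problem of counting spanning trees of $K_{m,n}$ containing $F$ into the problem of counting spanning trees of the multigraph $K_{m,n}/E(F)$, since $\tau_F(K_{m,n})=\tau(K_{m,n}/E(F))$. The vertex obtained from $T_i$ ``inherits'' $m_i$ vertices of $X$ and $n_i$ vertices of $Y$, and between the contracted vertices coming from $T_i$ and $T_j$ there are exactly $m_in_j+m_jn_i$ parallel edges. So the real object to analyse is the complete multigraph on $k$ vertices in which vertex $i$ carries the label $(m_i,n_i)$ and the edge $ij$ has multiplicity $m_in_j+m_jn_i$; equivalently, a weighted $K_k$ with weight $w_{ij}=m_in_j+m_jn_i$ on edge $ij$. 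I would therefore first reduce Theorem \ref{mainthm} to the statement
\[
\tau(K_k,w)=\frac1{mn}\left(\prod_{i=1}^k(m_in+n_im)\right)\left(1-\sum_{i=1}^k\frac{m_in_i}{m_in+n_im}\right),
\]
where $m=\sum_i m_i$, $n=\sum_i n_i$, and $w_{ij}=m_in_j+m_jn_i$; note the factorisation $w_{ij}=(m_in+n_im)(m_jn+n_jm)/(mn)-\text{(correction)}$ will be what makes the Matrix--Tree determinant tractable.

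The key computational step is then to evaluate $\tau(K_k,w)$ by the weighted Matrix--Tree Theorem: it equals any cofactor of the weighted Laplacian $L=D-A$, where $A_{ij}=w_{ij}=m_in_j+m_jn_i$ for $i\ne j$ and $D$ is the diagonal matrix of weighted degrees $d_i=\sum_{j\ne i}w_{ij}$. The point is that $A$ has the rank-two form $A=\mathbf m\mathbf n^{\mathsf T}+\mathbf n\mathbf m^{\mathsf T}-\text{diag}(2m_in_i)$, so $L$ is (diagonal) minus (rank $\le 2$), and the determinant of a cofactor of such a structured matrix can be computed by the matrix determinant lemma / direct expansion. Concretely I would write $d_i=m_in+n_im-2m_in_i$, pull out the diagonal factor $\prod_i(m_in+n_im-2m_in_i)$ — or better, rescale so the diagonal becomes $\prod_i(m_in+n_im)$ after absorbing the $-2m_in_i$ into the low-rank part — and reduce the cofactor to the determinant of a $2\times2$ (or $3\times3$) matrix built from the inner products $\sum_i m_i^2/(\cdot)$, $\sum_i m_in_i/(\cdot)$, $\sum_i n_i^2/(\cdot)$. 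After simplification, using $\sum m_i=m$ and $\sum n_i=n$, this should collapse exactly to the claimed closed form; as a sanity check, $k=1$ gives $\tau=\frac1{mn}(mn)(1-\frac{mn}{mn})$... which vanishes — correct, since a spanning tree $T$ of $K_{m,n}$ has no proper spanning tree containing it unless... actually for $k=1$ the forest is already a spanning tree so $\tau_F=1$; thus $k=1$ must be handled as the base case directly (or the formula verified to give $1$ once the degenerate denominators are treated as limits), and $k=2$ should reproduce a case one can match against Theorem \ref{tree} and Theorem \ref{matching}.

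An alternative, possibly cleaner, route that avoids a large determinant is induction on $k$ via edge operations inside one $T_i$: delete--contract an edge $e$ of $K_{m,n}$ joining two vertices in different components, $\tau_F(K_{m,n})=\tau_{F+e}(K_{m,n})+\tau_F(K_{m,n}/e)$ — wait, one must be careful since $K_{m,n}/e$ is no longer bipartite; so instead I would induct by ``growing'' $F$: fix a component, say $T_k$, which is a single vertex, and relate $\tau_F$ for the forest with $T_k$ isolated to $\tau_{F'}$ where $F'$ has that vertex attached to some $T_j$, summing over all $m_jn_j$ choices... this leads to a recurrence in which the hypothesised formula can be verified algebraically. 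I expect the main obstacle to be the bookkeeping in the Matrix--Tree determinant: keeping track of which cofactor to take, handling the $-2m_in_i$ diagonal correction so that it combines with the rank-two term $\mathbf m\mathbf n^{\mathsf T}+\mathbf n\mathbf m^{\mathsf T}$ into something of manageable rank, and then recognising the resulting symmetric-function expression as the stated product-times-$(1-\sum)$ form. The weighted/multigraph reduction itself and the base cases should be routine; the determinant identity is where the real work lies, and getting the low-rank decomposition of $L$ exactly right is the crux.
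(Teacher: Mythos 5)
Your reduction to the contracted multigraph $K_{m,n}/F$ --- a weighted $K_k$ with edge weights $m_in_j+m_jn_i$ --- is exactly the paper's first step, but from there you take a genuinely different route: the paper evaluates $\tau(K_k,\omega)$ by an inclusion--exclusion recursion over the edges at one fixed vertex (Lemma~\ref{le4-1}) combined with a long polynomial identity for the candidate function $\phi$ (Theorem~\ref{th3-1}), whereas you propose the weighted Matrix--Tree Theorem exploiting the low-rank structure of the Laplacian. Your route is viable and arguably shorter, and your instinct to absorb the $-2m_in_i$ into the low-rank part is exactly right: with $\mathbf m=(m_1,\dots,m_k)^{\mathsf T}$ and $\mathbf n=(n_1,\dots,n_k)^{\mathsf T}$, the degrees $d_i=m_i(n-n_i)+n_i(m-m_i)$ combine with the off-diagonal entries to give, with no leftover diagonal correction,
\[
L=\operatorname{diag}(m_in+n_im)-\mathbf m\mathbf n^{\mathsf T}-\mathbf n\mathbf m^{\mathsf T}.
\]
Deleting row and column $k$ and applying the matrix determinant lemma to the rank-two update reduces the cofactor to
\[
\left(\prod_{i<k}W_i\right)\left(\Bigl(1-\sum_{i<k}\tfrac{m_in_i}{W_i}\Bigr)^2-\Bigl(\sum_{i<k}\tfrac{m_i^2}{W_i}\Bigr)\Bigl(\sum_{i<k}\tfrac{n_i^2}{W_i}\Bigr)\right),\qquad W_i=m_in+n_im,
\]
and the step you leave as ``should collapse'' is a genuine identity that must be proved: one needs $\bigl(1-\sum_{i\le k}m_in_i/W_i\bigr)^2=\bigl(\sum_{i\le k}m_i^2/W_i\bigr)\bigl(\sum_{i\le k}n_i^2/W_i\bigr)$ together with $\sum_{i\le k} m_i^2/W_i=\tfrac mn\bigl(1-\sum_{i\le k} m_in_i/W_i\bigr)$, which is precisely the paper's Lemma~\ref{le2-01}; with it, the displayed bracket telescopes to $\bigl(1-\sum_{i=1}^k m_in_i/W_i\bigr)W_k/(mn)$ and the theorem follows. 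Two minor corrections: your $k=1$ sanity check is miscomputed --- the formula gives $\frac1{mn}\cdot 2mn\cdot(1-\tfrac12)=1$ as it should, so no limiting argument or separate base case is needed; and no denominator $W_i$ vanishes, since $m_i+n_i\ge1$ and $m,n\ge1$. The trade-off between the two proofs: the paper's recursion avoids linear algebra entirely but pays for it with the lengthy identity of its Section~3, while your determinant route concentrates all the work into one rank-two cofactor computation plus Lemma~\ref{le2-01}.
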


\section{Preliminary results}\label{sec2}

This section provides some results which will be
applied in the next section for proving a key identity.

\begin{lemma}\label{le2-01}
For any set of $k$ pairs of real numbers
$\{a_i,b_i: i\in \brk{k} \}$, where $k\ge 1$,
if $a_iB+b_iA\ne 0$ for all
$i\in \brk{k}$
where $A=a_1+a_2+\cdots+a_k$ and $B=b_1+b_2+\cdots+b_k$, then  \label{r2-3}
\equ{le2-01e1}
{
\left (
1-\sum_{i=1}^k\frac{a_ib_i}{a_iB+b_iA}
\right )^2
=\left(\sum_{i=1}^k\frac{a_i^2}{a_iB+b_iA}\right)
\cdot \left(\sum_{i=1}^k\frac{b_i^2}{a_iB+b_iA}\right).
}
\end{lemma}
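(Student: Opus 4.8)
The plan is to derive the claimed quadratic identity from two elementary linear relations. Write $c_i = a_iB + b_iA$, so that every $c_i \ne 0$ by hypothesis, and set
\[
P = \sum_{i=1}^k \frac{a_i^2}{c_i}, \qquad Q = \sum_{i=1}^k \frac{b_i^2}{c_i}, \qquad R = \sum_{i=1}^k \frac{a_ib_i}{c_i},
\]
so that \eqref{le2-01e1} reads $(1-R)^2 = PQ$. The one idea I would use is the triviality $A = \sum_{i=1}^k a_i = \sum_{i=1}^k a_ic_i/c_i$: since $a_ic_i = a_i^2 B + a_ib_i A$, this rewrites as $A = BP + AR$, that is,
\[
A(1-R) = BP .
\]
Repeating the same computation with $B = \sum_{i=1}^k b_i = \sum_{i=1}^k b_ic_i/c_i$ and $b_ic_i = a_ib_i B + b_i^2 A$ gives
\[
B(1-R) = AQ .
\]

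Multiplying these two identities yields $AB(1-R)^2 = AB\cdot PQ$, so the lemma is immediate whenever $AB \ne 0$. It then remains only to treat the degenerate case $AB = 0$. Note that $A$ and $B$ cannot both vanish, since otherwise $c_i = 0$ for every $i$, contradicting the hypothesis. So assume, without loss of generality, that $A = 0$, the case $B = 0$ being symmetric. Then $c_i = a_iB$ for all $i$, which forces $B \ne 0$ and $a_i \ne 0$ for each $i$; hence
\[
R = \sum_{i=1}^k \frac{a_ib_i}{a_iB} = \frac{1}{B}\sum_{i=1}^k b_i = 1 \qquad\text{and}\qquad P = \sum_{i=1}^k \frac{a_i^2}{a_iB} = \frac{A}{B} = 0 ,
\]
so both sides of \eqref{le2-01e1} equal $0$, as required.

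I do not anticipate any real obstacle: the whole argument hinges on the single observation that $\sum_i a_i = \sum_i a_ic_i/c_i$ (and its twin for $\sum_i b_i$), after which only the routine bookkeeping of the case $AB = 0$ is left. In the intended application, with $a_i = m_i$ and $b_i = n_i$ as in Theorem~\ref{mainthm}, the numbers $A$ and $B$ are positive integers, so that last case does not even arise; but it costs nothing to state the lemma in the clean generality above.
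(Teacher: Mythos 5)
Your proof is correct and follows essentially the same route as the paper: both rest on the linear identity $A = BP + AR$ (obtained by summing $a_i c_i/c_i$) and its twin, followed by multiplication and a separate check of the degenerate case $AB=0$. The only cosmetic difference is that you multiply the two relations before dividing by $AB$, whereas the paper divides first; the substance is identical.
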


\begin{proof}
Since $a_iB+b_iA\ne 0$, $A$ and $B$ cannot both be 0.

When $A=0$ and $B\neq 0$, equality (\ref{le2-01e1}) holds because
$$\left(1-\sum_{i=1}^k\frac{a_ib_i}{a_iB+b_iA}\right)^2=\left(1-\sum_{i=1}^k\frac{b_i}{B}\right)^2=0,$$
and
$$\left(\sum_{i=1}^k\frac{a_i^2}{a_iB+b_iA}\right)\cdot
\left(\sum_{i=1}^k\frac{b_i^2}{a_iB+b_iA}\right)=\frac{A}{B}\cdot\sum_{i=1}^k\frac{b_i^2}{a_iB}=0.$$

Similarly, equality (\ref{le2-01e1}) holds when $B=0$ and $A\neq 0$.
Let $W_i=a_iB+b_iA$ for $i\in \brk{k}$.
When $A\neq 0$ and $B\neq 0$, observe that
\aln{eq2-02}
{
A\sum_{i=1}^k\frac{a_ib_i}{W_i}
+B\sum_{i=1}^k\frac{a_i^2}{W_i}
=\sum_{i=1}^k\frac{a_i(Ab_i+Ba_i)}{W_i}
=\sum_{i=1}^ka_i=A,
}
implying that
\equ
{eq2-03}
{
\sum_{i=1}^k\frac{a_i^2}{W_i}
=\frac{A}{B}
\left (1-\sum_{i=1}^k\frac{a_ib_i}{W_i}
\right ).
}
Similarly,
\equ
{eq2-04}
{
\sum_{i=1}^k\frac{b_i^2}{W_i}
=\frac{B}{A}
\left (1-\sum_{i=1}^k\frac{a_ib_i}{W_i}
\right ).
}
Clearly, equality (\ref{le2-01e1}) follows from
(\ref{eq2-03}) and (\ref{eq2-04}).
\end{proof}

For a set $A$ of real numbers,
in the following, if $A$ is the empty set, we set
\equ{setting}
{
\prod_{a\in A}a=1 \quad \mbox{and}\quad
\sum_{a\in A}a=0.
}

\begin{lemma}\label{le2-0}
Let $S$ be a set of positive integers.
For any set of $2|S|$ real numbers $\{a_i,b_i: i\in S\}$,
\begin{equation} \label{le2-0-1}
\sum_{\emptyset \ne I\subseteq S}
\left (
\left(\prod_{j\in I}a_j\right)
\left(\prod_{r\in S\setminus I}b_r\right )
\right )
=\prod_{i\in S}(a_i+b_i)
-
\prod_{i\in S}b_i.
\end{equation}
\end{lemma}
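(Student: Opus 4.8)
The plan is to prove this identity by a straightforward expansion argument, since it is essentially the multinomial statement that $\prod_{i\in S}(a_i+b_i)$ expands into a sum over all subsets $I\subseteq S$ of the term $\bigl(\prod_{j\in I}a_j\bigr)\bigl(\prod_{r\in S\setminus I}b_r\bigr)$, and the lemma merely isolates the $I=\emptyset$ term. First I would expand the product $\prod_{i\in S}(a_i+b_i)$ by distributivity: for each $i\in S$ one chooses either $a_i$ or $b_i$, and each such choice is indexed by the set $I\subseteq S$ of coordinates for which $a_i$ was picked. This gives
\[
\prod_{i\in S}(a_i+b_i)=\sum_{I\subseteq S}\left(\prod_{j\in I}a_j\right)\left(\prod_{r\in S\setminus I}b_r\right),
\]
where the summation now ranges over \emph{all} subsets of $S$, including the empty set. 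The term corresponding to $I=\emptyset$ is, by the convention~(\ref{setting}) that an empty product equals $1$, exactly $\prod_{r\in S}b_r$. Subtracting this single term from both sides yields the desired identity~(\ref{le2-0-1}).

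Alternatively, if a fully rigorous self-contained argument is wanted rather than an appeal to the distributive law, one can proceed by induction on $|S|$. The base case $|S|=1$, say $S=\{i\}$, reads $a_i=(a_i+b_i)-b_i$ on the left (the only nonempty subset being $\{i\}$ itself) and is immediate. For the inductive step, pick $s\in S$ and split the subsets $\emptyset\ne I\subseteq S$ according to whether $s\in I$ or $s\notin I$; in the first case write $I=I'\cup\{s\}$ with $I'\subseteq S\setminus\{s\}$ arbitrary (possibly empty), and in the second case $I\subseteq S\setminus\{s\}$ is nonempty. Factoring out $a_s$ from the first group and $b_s$ from the second, and applying the convention~(\ref{setting}) together with the induction hypothesis on $S\setminus\{s\}$, one gets
\[
a_s\prod_{i\in S\setminus\{s\}}(a_i+b_i)
+b_s\left(\prod_{i\in S\setminus\{s\}}(a_i+b_i)-\prod_{i\in S\setminus\{s\}}b_i\right)
=(a_s+b_s)\prod_{i\in S\setminus\{s\}}(a_i+b_i)-b_s\prod_{i\in S\setminus\{s\}}b_i,
\]
which is precisely $\prod_{i\in S}(a_i+b_i)-\prod_{i\in S}b_i$.

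I do not anticipate any real obstacle here: the statement is a packaging of the binomial/distributive expansion, and the only point requiring a little care is the bookkeeping of empty products and sums via the stated convention~(\ref{setting}), so that the $I=\emptyset$ contribution on the full expansion is correctly identified as $\prod_{i\in S}b_i$ and the degenerate case $S=\emptyset$ (if it arises) is handled consistently. The expansion argument is cleaner and shorter, so that is the one I would write up, mentioning the inductive alternative only if the paper's style favors avoiding an appeal to multi-indexed distributivity.
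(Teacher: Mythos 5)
Your main argument is exactly the paper's proof: the authors likewise derive the identity from the full expansion $\sum_{I\subseteq S}\bigl(\prod_{j\in I}a_j\bigr)\bigl(\prod_{r\in S\setminus I}b_r\bigr)=\prod_{i\in S}(a_i+b_i)$ and then remove the $I=\emptyset$ term, which equals $\prod_{i\in S}b_i$. The proposal is correct, and the inductive alternative you mention is unnecessary but harmless.
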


\begin{proof}
The identity follows from the following fact:
\begin{equation} \label{le2-0-2}
\sum_{\emptyset\subseteq I\subseteq S}
\left (
\left(\prod_{j\in I}a_j\right )
\left(\prod_{r\in S\setminus I}b_r\right )
\right )
=\prod_{i\in S}(a_i+b_i).
\end{equation}
\end{proof}

\begin{lemma}\label{le2-1}
Let $S$ be a set of positive integers.
For any set of $3|S|$
real numbers $\{a_i,b_i,c_i: i\in S\}$,
\begin{equation} \label{le2-1-1}
\sum_{\emptyset \ne I\subseteq S}
\left (
\left (\sum_{i\in I}c_i \right )
\left(\prod_{j\in I}a_j\right )
\left (\prod_{r\in S\setminus I}b_r\right )
\right )
=\left(\prod_{j\in S}(a_j+b_j)\right)
\cdot
\sum_{i\in S}
\frac{c_ia_i}{a_i+b_i}.
\end{equation}
\end{lemma}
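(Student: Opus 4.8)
The plan is to prove the identity by a double-counting / exchange-of-summation argument, interchanging the roles of the index $i\in I$ carrying the factor $c_i$ and the subset $I$ itself. Write the left-hand side as
\[
\sum_{\emptyset\ne I\subseteq S}\ \sum_{i\in I} c_i\left(\prod_{j\in I}a_j\right)\left(\prod_{r\in S\setminus I}b_r\right),
\]
and swap the two sums so that the outer sum is over $i\in S$ and the inner sum is over all $I\subseteq S$ with $i\in I$. For a fixed $i$, the factor $c_i a_i$ can be pulled out of the inner sum (since $i\in I$ forces the factor $a_i$ to appear), leaving $\sum_{I\ni i}\bigl(\prod_{j\in I\setminus\{i\}}a_j\bigr)\bigl(\prod_{r\in S\setminus I}b_r\bigr)$, where now $I$ ranges over all subsets of $S$ containing $i$; equivalently $I\setminus\{i\}$ ranges over all subsets of $S\setminus\{i\}$.

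The key step is then to recognize that this inner sum is exactly the full (including the empty set) subset-sum expansion of $\prod_{j\in S\setminus\{i\}}(a_j+b_j)$, by the identity (\ref{le2-0-2}) from Lemma \ref{le2-0} applied with ground set $S\setminus\{i\}$. Hence the whole expression becomes $\sum_{i\in S} c_i a_i \prod_{j\in S\setminus\{i\}}(a_j+b_j)$. Finally, factoring $\prod_{j\in S}(a_j+b_j)$ out of each term — which is legitimate as a formal polynomial identity, and when $a_i+b_i\ne 0$ amounts to dividing and multiplying by that factor — gives $\left(\prod_{j\in S}(a_j+b_j)\right)\sum_{i\in S}\frac{c_i a_i}{a_i+b_i}$, which is the right-hand side.

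I do not expect a serious obstacle here; the only point requiring a little care is the last factoring step, since some $a_j+b_j$ could vanish. This is best handled by treating the identity as a polynomial identity in the indeterminates $a_i, b_i, c_i$ (so that $\prod_{j\in S\setminus\{i\}}(a_j+b_j)$ is simply a polynomial and no division occurs), or equivalently by writing the RHS in the cleared form $\sum_{i\in S} c_i a_i \prod_{j\ne i}(a_j+b_j)$ and noting the stated form is just a rewriting valid under the implicit nonvanishing hypotheses. The empty-product/empty-sum conventions in (\ref{setting}) make the $|S|=1$ base case and the appearance of $S\setminus\{i\}=\emptyset$ consistent.
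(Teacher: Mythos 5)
Your proof is correct and takes essentially the same route as the paper: the intermediate identity you obtain by swapping the order of summation, $\sum_{i\in S} c_i a_i \prod_{j\in S\setminus\{i\}}(a_j+b_j)$, is precisely the ``fact'' (\ref{le2-1-2}) from which the paper deduces the lemma, and your final factoring step matches theirs. You simply supply more detail than the paper (which states that fact without proof), including the sensible remark about treating the division by $a_i+b_i$ as a formal rewriting.
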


\begin{proof}
The identity follows from the following fact:
\begin{equation} \label{le2-1-2}
\sum_{\emptyset \ne I\subseteq S}
\left (
\left (\sum_{i\in I}c_i \right )
\left( \prod_{j\in I}a_j\right )
\left(\prod_{r\in S\setminus I}b_r\right )
\right )
=\sum_{i\in S} \left (
c_ia_i\prod_{j\in S\setminus \{i\}}(a_j+b_j)
\right ).
\end{equation}
\end{proof}

By Lemmas~\ref{le2-0} and~\ref{le2-1}, for an arbitrary real number $c$, we have
\begin{equation} \label{le2-1-1ex}
\sum_{\emptyset \ne I\subseteq S}
\left (
\left (
c+\sum_{i\in I}c_i \right )
\left(\prod_{j\in I}a_j\right)
\left(\prod_{r\in S\setminus I}b_r\right )
\right )
=\left (\prod_{j\in S}(a_j+b_j)\right )
\cdot
\left (c+
\sum_{i\in S}
\frac{c_ia_i}{a_i+b_i}
\right )
-c\prod_{j\in S}b_j.
\end{equation}

\label{r2-5}
\begin{lemma}\label{le2-11}
Let $S$ be a set of positive integers.
For any set of $3|S|$
real numbers $\{a_i,b_i,d_i: i\in S\}$,
\begin{equation} \label{le2-11-1}
\sum_{\emptyset \ne I\subseteq S}
\left (
\left (\sum_{i\in S\setminus I}d_i \right )
\left (\prod_{j\in I}a_j\right )
\left(\prod_{r\in S\setminus I}b_r\right )
\right )
=\left(\prod_{j\in S}(a_j+b_j)\right)
\cdot
\left(\sum_{i\in S}
\frac{d_ib_i}{a_i+b_i}\right )
-\left( \prod_{r\in S}b_r\right )
\cdot
\left( \sum_{i\in S}d_i\right ).
\end{equation}
\end{lemma}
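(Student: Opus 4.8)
The plan is to reduce this to the previous two lemmas by rewriting $\sum_{i\in S\setminus I}d_i$ as $\sum_{i\in S}d_i - \sum_{i\in I}d_i$. With this substitution the left-hand side splits as
$$
\left(\sum_{i\in S}d_i\right)\sum_{\emptyset\ne I\subseteq S}\left(\prod_{j\in I}a_j\right)\left(\prod_{r\in S\setminus I}b_r\right)
\;-\;\sum_{\emptyset\ne I\subseteq S}\left(\sum_{i\in I}d_i\right)\left(\prod_{j\in I}a_j\right)\left(\prod_{r\in S\setminus I}b_r\right).
$$
The first sum is handled directly by Lemma~\ref{le2-0} (identity~(\ref{le2-0-1})), which gives $\prod_{i\in S}(a_i+b_i)-\prod_{i\in S}b_i$; the second sum is exactly the left-hand side of~(\ref{le2-1-1}) in Lemma~\ref{le2-1} with $c_i$ replaced by $d_i$, so it equals $\left(\prod_{j\in S}(a_j+b_j)\right)\sum_{i\in S}\frac{d_ia_i}{a_i+b_i}$.

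Then I would assemble the pieces: the left-hand side becomes
$$
\left(\sum_{i\in S}d_i\right)\left(\prod_{i\in S}(a_i+b_i)-\prod_{i\in S}b_i\right)
-\left(\prod_{j\in S}(a_j+b_j)\right)\sum_{i\in S}\frac{d_ia_i}{a_i+b_i}.
$$
Factoring out $\prod_{i\in S}(a_i+b_i)$ from the two terms containing it and using $\sum_{i\in S}d_i - \sum_{i\in S}\frac{d_ia_i}{a_i+b_i} = \sum_{i\in S}d_i\left(1-\frac{a_i}{a_i+b_i}\right) = \sum_{i\in S}\frac{d_ib_i}{a_i+b_i}$ gives precisely
$$
\left(\prod_{j\in S}(a_j+b_j)\right)\sum_{i\in S}\frac{d_ib_i}{a_i+b_i}
-\left(\prod_{r\in S}b_r\right)\sum_{i\in S}d_i,
$$
which is the right-hand side of~(\ref{le2-11-1}).

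There is essentially no obstacle here: the lemma is a formal consequence of Lemmas~\ref{le2-0} and~\ref{le2-1} together with the trivial algebraic identity $1-\frac{a_i}{a_i+b_i}=\frac{b_i}{a_i+b_i}$. The only point requiring a word of care is the empty-set convention~(\ref{setting}): when $I=S$ the factor $\prod_{r\in S\setminus I}b_r$ is an empty product equal to $1$, and one should note that the identity still holds (vacuously, with both sides $0$) in the degenerate case $S=\emptyset$, so that the lemma may be applied later without a separate check on whether $S$ is empty.
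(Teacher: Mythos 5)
Your proof is correct, and it takes a genuinely different (though equally elementary) route from the paper's. You decompose $\sum_{i\in S\setminus I}d_i$ as $\sum_{i\in S}d_i-\sum_{i\in I}d_i$, handle the two resulting sums with Lemma~\ref{le2-0} and Lemma~\ref{le2-1} respectively, and recombine via $1-\frac{a_i}{a_i+b_i}=\frac{b_i}{a_i+b_i}$. The paper instead re-indexes the sum by the substitution $I\mapsto S\setminus I$, observing that
\[
\sum_{\emptyset \ne I\subseteq S}
\left (\sum_{i\in S\setminus I}d_i \right )
\left (\prod_{j\in I}a_j\right )
\left(\prod_{r\in S\setminus I}b_r\right )
=\sum_{\emptyset \ne I\subseteq S}
\left (\sum_{i\in I}d_i \right )
\prod_{j\in S\setminus I}a_j
\prod_{r\in I}b_r
-
\left( \prod_{r\in S}b_r\right )\cdot \left( \sum_{i\in S}d_i\right ),
\]
where the subtracted term corrects for the index set $I=S$ that the complementation introduces (the $I=\emptyset$ term vanishes); a single application of Lemma~\ref{le2-1} with the roles of the $a_i$ and $b_i$ exchanged then finishes. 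The two arguments are of comparable length: yours uses one extra lemma but avoids the re-indexing bookkeeping, and your remarks on the empty-set conventions are a harmless bonus. Note that both your recombination step and the statement of (\ref{le2-11-1}) itself tacitly require $a_i+b_i\ne 0$ for all $i$, a hypothesis the lemma omits; this is an issue inherited from the paper rather than introduced by your proof.
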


\begin{proof}
It follows from Lemma~\ref{le2-1} and the following fact:
\begin{equation} \label{le2-11-2}
\sum_{\emptyset \ne I\subseteq S}
\left (
\left (\sum_{i\in S\setminus I}d_i \right )
\left (\prod_{j\in I}a_j\right )
\left(\prod_{r\in S\setminus I}b_r\right )
\right )
=\sum_{\emptyset \ne I\subseteq S}
\left (
\left (\sum_{i\in I}d_i \right )
\prod_{j\in S\setminus I}a_j
\prod_{r\in I}b_r
\right )-
\left( \prod_{r\in S}b_r\right )\cdot \left( \sum_{i\in S}d_i\right ).
\end{equation}
\end{proof}

\begin{lemma}\label{le2-2}
Let $S$ be a set of positive integers.
For any set of $4|S|$ real numbers
$\{a_i,b_i,c_i,d_i: i\in S\}$,
\begin{equation} \label{le2-2-1}
\sum_{\emptyset \ne I\subsetneq S}
\left (
\left (\sum_{i\in I}c_i \right )
\left (\sum_{q\in S\setminus I}d_q \right )
\left (\prod_{j\in I}a_j\right )
\left(\prod_{r\in S\setminus I}b_r\right )
\right )
=\left(\prod_{r\in S}(a_r+b_r)\right)
\cdot
\sum_{i,j\in S\atop i\ne j}
\frac{c_ia_id_jb_j}{(a_i+b_i)(a_j+b_j)}.
\end{equation}
\end{lemma}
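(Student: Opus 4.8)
The plan is to mirror the strategy already used for Lemmas~\ref{le2-0}, \ref{le2-1}, and \ref{le2-11}: first reduce the nested-sum identity to a ``pointwise'' statement in which the double sum over subsets $I$ is collapsed to a single sum over ordered pairs $(i,j)$ with $i\ne j$, and then verify that pointwise statement by a direct bookkeeping argument. Concretely, I would first prove the analogue of \eqref{le2-1-2}, namely
\[
\sum_{\emptyset \ne I\subsetneq S}
\left (
\left (\sum_{i\in I}c_i \right )
\left (\sum_{q\in S\setminus I}d_q \right )
\left (\prod_{j\in I}a_j\right )
\left(\prod_{r\in S\setminus I}b_r\right )
\right )
=\sum_{i,j\in S\atop i\ne j}
\left (
c_id_j\, a_i b_j\prod_{\ell\in S\setminus\{i,j\}}(a_\ell+b_\ell)
\right ),
\]
and then observe that the right-hand side equals $\left(\prod_{r\in S}(a_r+b_r)\right)\sum_{i\ne j}\frac{c_ia_id_jb_j}{(a_i+b_i)(a_j+b_j)}$, since $a_ib_j\prod_{\ell\notin\{i,j\}}(a_\ell+b_\ell)=\frac{a_ib_j}{(a_i+b_i)(a_j+b_j)}\prod_{r\in S}(a_r+b_r)$ (here the condition $a_\ell+b_\ell$ appearing in denominators is harmless as a formal identity, or one simply expands both sides as polynomials). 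The passage from the displayed pointwise identity to \eqref{le2-2-1} is then immediate.

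To prove the displayed pointwise identity I would swap the order of summation: write $\sum_{i\in I}c_i = \sum_{i\in S} c_i\,[i\in I]$ and $\sum_{q\in S\setminus I}d_q = \sum_{j\in S} d_j\,[j\in S\setminus I]$, so that the left-hand side becomes
\[
\sum_{i,j\in S} c_i d_j \sum_{\emptyset\ne I\subsetneq S \atop i\in I,\ j\notin I}
\left(\prod_{j'\in I}a_{j'}\right)\left(\prod_{r\in S\setminus I}b_r\right).
\]
The inner sum is nonempty only when $i\ne j$ (a subset cannot contain $i$ and exclude $i$), which is exactly why the diagonal terms disappear and why the restriction $I\subsetneq S$ together with $I\ne\emptyset$ is automatically encoded by forcing $i\in I$ and $j\notin I$. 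For fixed $i\ne j$, the subsets $I$ with $i\in I$, $j\notin I$ range over all $2^{|S|-2}$ choices on $S\setminus\{i,j\}$; summing $\prod_{j'\in I}a_{j'}\prod_{r\in S\setminus I}b_r = a_i b_j \prod_{\ell\in S\setminus\{i,j\}} (a_\ell\text{ or }b_\ell)$ over these choices gives $a_i b_j\prod_{\ell\in S\setminus\{i,j\}}(a_\ell+b_\ell)$ by the binomial-type expansion \eqref{le2-0-2} applied to the index set $S\setminus\{i,j\}$. This yields exactly the claimed pointwise identity.

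I do not expect a genuine obstacle here; the only points requiring care are purely clerical. First, one must keep track of the constraint $I\subsetneq S$ in \eqref{le2-2-1} (as opposed to $I\subseteq S$ in the earlier lemmas) and confirm it is automatically enforced once we demand $j\in S\setminus I$ for some $j$ contributing a nonzero $d_j$; strictly, the identity as stated sums over all $\emptyset\ne I\subsetneq S$ regardless of whether the $d$-sum vanishes, but the reindexing above shows every such $I$ is counted with the correct coefficient. Second, when $|S|=1$ both sides are $0$ (the left sum is empty since there is no $I$ with $\emptyset\ne I\subsetneq S$, and the right sum is empty since there is no pair $i\ne j$), so the base case is consistent. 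The combinatorial heart — recognizing that the ordered-pair structure on the right comes from choosing which element ``witnesses'' $I$ via $c$ and which ``witnesses'' $S\setminus I$ via $d$ — is the same device used in Lemma~\ref{le2-1}, merely applied twice.
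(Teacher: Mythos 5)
Your proposal is correct and follows essentially the same route as the paper: the paper's proof consists precisely of asserting the pointwise identity (\ref{le2-2-2}), i.e.\ your displayed reduction to $\sum_{i\ne j} c_ia_id_jb_j\prod_{\ell\in S\setminus\{i,j\}}(a_\ell+b_\ell)$, and your interchange-of-summation argument supplies the verification the paper leaves implicit. No gap.
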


\begin{proof}
The result follows from the following fact:
\begin{equation} \label{le2-2-2}
\sum_{\emptyset \ne I\subsetneq S}
\left (
\left (\sum_{i\in I}c_i \right )
\left (\sum_{q\in S\setminus I}d_q \right )
\left (\prod_{j\in I}a_j\right )
\left(\prod_{r\in S\setminus I}b_r\right )
\right )
=\sum_{i,j\in S\atop i\ne j}
\left (
c_ia_id_jb_j\prod_{r\in S\setminus \{i,j\}}(a_r+b_r)
\right ).
\end{equation}
\end{proof}

By Lemmas~\ref{le2-11} and~\ref{le2-2}, for an arbitrary real number $c$, we have
\aln{le2-2-1ex}
{
&\sum_{\emptyset \ne I\subseteq S}
\left (
\left (c+\sum_{i\in I}c_i \right )
\left (\sum_{q\in S\setminus I}d_q \right )
\left (\prod_{j\in I}a_j\right )
\left(\prod_{r\in S\setminus I}b_r\right )
\right )
\nonumber \\
=&\left(\prod_{r\in S}(a_r+b_r)\right)
\cdot \left (
\sum_{i,j\in S\atop i\ne j}
\frac{c_ia_id_jb_j}{(a_i+b_i)(a_j+b_j)}
+
c\sum_{i\in S}
\frac{d_ib_i}{a_i+b_i}
\right )
-c\left( \prod_{r\in S}b_r\right )\cdot \left( \sum_{i\in S}d_i\right ).
}

\section{An identity}\label{sec3}

Define a function $\phi$ on $2k$ variables
$x_1,x_2,\ldots,x_k$ and $y_1,y_2,\ldots,y_k$, where $k\ge 1$,
as follows:
\begin{equation} \label{e3-3}
\phi(x_1,y_1,x_2,y_2,\ldots,x_k,y_k)
=\frac{1}{XY}\left(\prod_{i=1}^k(x_iY+y_iX)\right)
\left (1 -\sum_{i=1}^k\frac{x_iy_i}{x_iY+y_iX}\right ),
\end{equation}
where $X=x_1+x_2+\cdots+x_k$ and $Y=y_1+y_2+\cdots+y_k$.
Observe that \label{r2-6}
$$\phi(x_1,y_1,x_2,y_2,\ldots,x_k,y_k)=\frac{1}{XY} \left(\left(\prod\limits_{i=1}^k(x_iY+y_iX)\right)
-\sum\limits_{i=1}^k \left(x_iy_i\prod\limits_{1\le j\le k\atop j\ne i}(x_jY+y_jX)\right)\right).$$
In the expansion of
$\left(\prod\limits_{i=1}^k(x_iY+y_iX)\right)-\sum\limits_{i=1}^k \left(x_iy_i\prod\limits_{1\le j\le k\atop j\ne i}(x_jY+y_jX)\right)$,
the expression consisting of all monomials not divisible by $XY$ is identically $0$, as
shown below:
\eqn{XY-non}
{
& &Y^k\prod_{i=1}^k x_i
+X^k\prod_{i=1}^k y_i
-\sum_{i=1}^k\left ( y_i
Y^{k-1}\prod_{j=1}^k x_j
\right )
-\sum_{i=1}^k \left (x_i
X^{k-1}\prod_{j=1}^k y_j
\right )\nonumber  \\
&=&Y^k\prod_{j=1}^k x_j
+X^k\prod_{j=1}^k y_j
-Y^{k-1}\prod_{j=1}^k x_j
\cdot
\left (\sum_{i=1}^ky_i \right )
-X^{k-1}\prod_{j=1}^k y_j
\cdot
\left(\sum_{i=1}^k x_i\right) \nonumber  \\
&=&0.
}
It follows that $\phi(x_1, y_1, x_2, y_2, \ldots, x_k, y_k)$ is a polynomial on $2k$ variables
$x_1, x_2, \ldots, x_k$ and $y_1, y_2,\ldots,y_k$.

For $k\le 3$,
\equ{phi}
{
\left \{
\begin{array}{l}
\phi(x_1,y_1)=1;\\
\phi(x_1,y_1,x_2,y_2)=z_{1,2};\\ %x_1y_2+x_2y_1;\\
\phi(x_1,y_1,x_2,y_2,x_3,y_3)=
z_{1,2}z_{1,3}+z_{1,2}z_{2,3}+z_{1,3}z_{2,3},
\end{array}
\right.
}
where $z_{i,j}=x_iy_j+x_jy_i$
for all $1\le i<j\le 3$.
%Let $\brk{k}=\{1,2,\ldots,k\}$.
For any $I\subseteq \brk{k}\setminus \{1\}$,
let
\begin{equation} \label{e3-2}
x_I=x_1+\sum_{i\in I}x_i,\quad
y_I=y_1+\sum_{i\in I}y_i.
\end{equation}
In this section, we shall establish the following identity,
which will be applied to prove the main result in the article.

\begin{theorem}\label{th3-1}
For any $2k$ real numbers $x_1,x_2,\ldots,x_k$
and $y_1,y_2,\ldots,y_k$, where $k\ge 2$,
\begin{equation} \label{e3-4}
\phi(x_1,y_1,x_2,y_2,\ldots,x_k,y_k)
=
\sum_{\emptyset\ne I\subseteq \brk{k}\setminus \{1\}}
\left (
(-1)^{|I|-1}
\left(\prod_{j\in I}(x_1y_j+x_jy_1)\right)
\phi(x_I, y_I, \underbrace{x_s,y_s}
_{\forall s\in \brk{k}\setminus (I\cup \{1\})})
\right ),
\end{equation}
where
$
\phi(x_I, y_I, \underbrace{x_s,y_s}
_{\forall s\in \brk{k}\setminus (I\cup \{1\})} )
=\phi(x_I,y_I,x_{i_1},y_{i_2},\ldots,x_{i_r},y_{i_r}),
$
$\{i_1,i_2,\ldots,i_r\}=\brk{k}\setminus (I\cup \{1\})$ and $r=k-1-|I|$.
\end{theorem}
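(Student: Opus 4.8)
The plan is to treat both sides of \eqref{e3-4} as polynomials in the $2k$ variables and verify the identity by a direct expansion using the product/summation lemmas from Section~\ref{sec2}. Write $\Delta_{1j}=x_1y_j+x_jy_1$ and, for $I\subseteq\brk{k}\setminus\{1\}$, let $X_I=x_I$ and $Y_I=y_I$ as in \eqref{e3-2}. The key observation is that the factor $x_iY+y_iX$ appearing in $\phi$ (with $X=\sum x_i$, $Y=\sum y_i$) splits naturally when we group the index $1$ with a subset $I$: if we set $W_i=x_iY+y_iX$ then for $i\notin I\cup\{1\}$ we can rewrite $x_iY+y_iX$ in terms of the "merged" totals $x_I,y_I$ plus the leftover singletons, while for the merged block the relevant factor becomes $x_IY+y_IX$. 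So the first step is to rewrite the right-hand side of \eqref{e3-4}, using the polynomial form of $\phi$ given just before \eqref{XY-non}, as a sum over $I$ of signed products of the $\Delta_{1j}$'s times a sum over the remaining indices; after clearing the common denominator $XY$ (legitimate since $\phi$ is a genuine polynomial), both sides become honest polynomials and it suffices to match coefficients.

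Second, I would expand the left-hand side $\phi(x_1,y_1,\ldots,x_k,y_k)$ via its polynomial expression $\frac1{XY}\bigl(\prod_{i=1}^k(x_iY+y_iX)-\sum_{i=1}^k x_iy_i\prod_{j\ne i}(x_jY+y_jX)\bigr)$ and expand each factor $x_iY+y_iX = x_i\sum_\ell y_\ell + y_i\sum_\ell x_\ell$. The monomials that survive are indexed by how the "cross terms" $x_1y_j$ and $x_jy_1$ distribute, which is precisely what produces the alternating sum over $I\subseteq\brk{k}\setminus\{1\}$ on the right: a standard inclusion–exclusion cancellation (of the shape handled by Lemma~\ref{le2-0} and \eqref{le2-0-2}) accounts for the $(-1)^{|I|-1}$ signs. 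Concretely, I expect that collecting, on the right-hand side, all terms that contain the fixed product $\prod_{j\in J}\Delta_{1j}$ for a given $J$ and using Lemmas~\ref{le2-0}, \ref{le2-1}, \ref{le2-11}, \ref{le2-2} (together with the derived identities \eqref{le2-1-1ex} and \eqref{le2-2-1ex}) to collapse the inner sums, will reproduce exactly the corresponding terms in the expansion of the left-hand side. An alternative, possibly cleaner route is induction on $k$: verify $k=2$ directly from \eqref{phi} (where it reduces to $z_{1,2}=\Delta_{12}\cdot\phi(x_{\{2\}},y_{\{2\}})=x_1y_2+x_2y_1$, trivially true), and for the inductive step peel off index $k$, apply the $k-1$ case to the remaining variables, and reconcile the two expansions using Lemma~\ref{le2-01} to handle the quadratic term $\sum x_iy_i/W_i$.

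The main obstacle will be the bookkeeping in the expansion: $\phi$ is not multilinear (it has the correction term $1-\sum x_iy_i/(x_iY+y_iX)$, equivalently the $\sum_i x_iy_i\prod_{j\ne i}(\cdots)$ piece), so when one merges the block $\{1\}\cup I$ the correction term for the merged variable $(x_I,y_I)$ is \emph{not} simply the sum of the individual corrections — there are extra cross terms $x_iy_j+x_jy_1$, etc., and these must be shown to telescope against the alternating sum. Managing these cross terms is exactly where Lemmas~\ref{le2-11} and~\ref{le2-2} (and their extensions \eqref{le2-1-1ex}, \eqref{le2-2-1ex}) are designed to be used, so the strategy is to isolate, for each pair $i,j$, the coefficient of $x_iy_j$ and check it matches on both sides; the degree-$2$-in-the-special-pair terms are the ones requiring Lemma~\ref{le2-01}. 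Once the coefficient of every monomial type (those linear in the "special" cross terms, those quadratic, and the "pure" part with no $\Delta_{1j}$ factor) is matched, the identity follows.
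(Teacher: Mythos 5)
Your overall strategy coincides with the paper's: the authors likewise write the right-hand side of (\ref{e3-4}) via the rational form of $\phi$, absorb the sign $(-1)^{|I|-1}$ into the products by replacing each $x_1y_j+x_jy_1$ with its negative, and then collapse the alternating sums over $I$ using precisely the derived identities (\ref{le2-1-1ex}) and (\ref{le2-2-1ex}), with Lemma~\ref{le2-01} supplying the relation $(Z_1-1)^2=Z_2Z_3$ needed for the quadratic cross terms at the end. So the toolkit and the architecture you describe are the right ones, and your $k=2$ check is correct.

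However, as written the proposal has a genuine gap: the decisive algebraic steps are asserted (``I expect that collecting \dots will reproduce exactly the corresponding terms'') rather than carried out, and two of them are not routine. First, the correction term $x_Iy_I$ attached to the merged block is quadratic in the merged variables and does not fit the template of any of Lemmas~\ref{le2-0}--\ref{le2-2}; the paper's proof hinges on the identity (\ref{term12}), which rewrites $x_IY+y_IX-x_Iy_I$ as $\bigl(y_1+\sum_{i\in I}y_i\bigr)X+\bigl(x_1+\sum_{i\in I}x_i\bigr)\sum_{i\in\brk{k}\setminus(I\cup\{1\})}y_i$, i.e.\ as a sum of pieces each of the shape the lemmas can handle. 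You correctly flag that the merged correction term ``is not simply the sum of the individual corrections,'' but you do not supply the identity that resolves this, and without it the coefficient matching cannot proceed. Second, after the lemmas are applied the collapsed expressions carry denominators $x_iY'+y_iX'$ with $X'=X-x_1$, $Y'=Y-y_1$, whereas the target carries denominators $x_iY+y_iX$; reconciling the two (via $x_iY+y_iX=(x_iY'+y_iX')+(x_iy_1+x_1y_i)$, followed by the final cancellation $-X'+\sum_{i\ge 2}x_i(y_iX'+x_iY')/(x_iY'+y_iX')=0$) is where most of the computation lives, and it is absent from your outline. Finally, your fallback of inducting on $k$ within Theorem~\ref{th3-1} itself is not viable as stated: peeling off index $k$ changes $X$ and $Y$ in every factor, so the $k-1$ case does not apply to the remaining variables without essentially redoing the direct computation; in the paper the induction occurs only later, in Theorem~\ref{th3}, \emph{using} Theorem~\ref{th3-1} as the engine.
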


\begin{proof}
Let $I$ be a non-empty subset of $\brk{k}\setminus \{1\}$. Then
\equ{th3-1-1}
{x_I+\sum_{s\in \brk{k}\setminus (I\cup \{1\})}x_s=\sum_{i=1}^kx_i=X
}
and
\equ
{th3-1-2}
{
y_I+\sum_{s\in \brk{k}\setminus (I\cup \{1\})}y_s=\sum_{i=1}^ky_i=Y.
}
In the remainder of the proof of Theorem~\ref{th3-1},
let $W_I=x_IY+y_IX$, and for each
$i\in \brk{k}$, let
$$
W_i=x_iY+y_iX\ \mbox{and}\
w_i=x_iy_1+y_ix_1.
$$
By the definition of the function $\phi$,
\aln{e3-5}
{
\phi(x_I, y_I, \underbrace{x_s,y_s}_
{\forall s\in \brk{k}\setminus (I\cup \{1\})} )
&=\frac{W_I} %{x_IY+y_IX}
{XY}\cdot
\left(\prod_{i\in \brk{k}\setminus (I\cup \{1\})}W_i%(x_iY+y_iX)
\right)
%\nonumber \\ & \cdot
\left (1 -\frac{x_Iy_I}{W_I}%{x_IY+y_IX}
-\sum_{i\in \brk{k}\setminus (I\cup \{1\})}
\frac{x_iy_i}{W_i} %{x_iY+y_iX}
\right ).
}
Thus, the right-hand side of (\ref{e3-4})
can be expressed as
\equ{eq3-50}
{
\frac {1}{XY}\left (\Gamma_1-\Gamma_2-\Gamma_3 \right ),
}
where
\begin{eqnarray}
	\Gamma_1&=&\sum_{\emptyset\ne I\subseteq \brk{k}\setminus \{1\}}
	\left ((-1)^{|I|-1}
	W_I %(x_IY+y_IX)
	\left (\prod_{j\in I}
	w_j%(x_1y_j+x_jy_1)
	\right )
	\left (\prod_{i\in \brk{k}\setminus (I\cup \{1\})} W_i%(x_iY+y_iX)
	\right )
	\right ), \label{eq3-51} \\
\Gamma_2&=&\sum_{\emptyset\ne I\subseteq \brk{k}\setminus \{1\}}
\left ( (-1)^{|I|-1}
x_Iy_I
\left (\prod_{j\in I}
w_j %(x_1y_j+x_jy_1)
\right )
\left (\prod_{i\in \brk{k}\setminus (I\cup \{1\})} W_i%(x_iY+y_iX)
\right )
\right ), \label{eq3-52} \\
%\mbox{and}\hfill & \nonumber \\
\Gamma_3&=&\sum\limits_{\emptyset\ne I\subseteq \brk{k}\setminus \{1\}}
\left ( (-1)^{|I|-1}
W_I %(x_IY+y_IX)
\left(\prod\limits_{j\in I}
w_j%(x_1y_j+x_jy_1)
\right )
\left(\prod\limits_{i\in \brk{k}\setminus (I\cup \{1\})} W_i %(x_iY+y_iX)
\right )
%\right. \nonumber \\ & \qquad \qquad \qquad \quad \left. \cdot
\left ( \sum\limits_{i\in \brk{k}\setminus (I\cup \{1\})}\frac{x_iy_i}
{W_i} %{x_iY+y_iX}
\right )
\right ).\label{eq3-53}
\end{eqnarray}
In the following, we shall apply Lemmas~\ref{le2-01}--~\ref{le2-2} to simplify $\Gamma_1, \Gamma_2$ and $\Gamma_3$
in order to show that
\equ{gamma-sum}
{\Gamma_1-\Gamma_2-\Gamma_3=
\left(\prod_{i=1}^k(x_iY+y_iX)\right)
\left (1-\sum_{i=1}^k\frac{x_iy_i}
{x_iY+y_iX}\right).
}
Let
%$x'_I=x_I-x_1=\sum\limits_{i\in I}x_i$,
%$y'_I=y_I-y_1=\sum\limits_{i\in I}y_i$,
$X'=X-x_1$ and $Y'=Y-y_1$.
Also let
\equ{3-terms}
{
\left \{
\begin{array}{l}
Z=\prod\limits_{i=2}^k(x_iY'+y_iX'),\\
Z_1=\sum\limits_{i=2}^k\frac{x_iy_i}{x_iY'+y_iX'},
\quad
Z_2=\sum\limits_{i=2}^k\frac{x_i^2}{x_iY'+y_iX'},
\quad
Z_3=\sum\limits_{i=2}^k\frac{y_i^2}{x_iY'+y_iX'}.
\end{array}
\right.
}
Note that for any non-empty subset $I$ of $\brk{k}\setminus \{1\}$,
\equ{term12}
{
y_IX+x_IY-x_Iy_I=\left(y_1+\sum_{i\in I}y_i\right)X
+
\left (x_1+\sum_{i\in I}x_i\right )
\left (\sum_{i\in \brk{k}\setminus (I\cup \{1\})}y_i\right ).
}
In the remainder of this section, let $W'_i$ denote $x_iY'+y_iX'$ for each
$i\in \brk{k}$.
By applying identities \label{r2-8}
(\ref{le2-1-1ex}), (\ref{le2-2-1ex})
and (\ref{term12}),
\begin{eqnarray}\label{eq3-61}
	& &\Gamma_1-\Gamma_2
	\nonumber \\
	&=&\sum_{\emptyset\ne I\subseteq \brk{k}\setminus \{1\}}
	\left ((-1)^{|I|-1}
	( W_I  %x_IY+y_IX
	-x_Iy_I)
	\left (\prod_{j\in I}
	w_j %(x_1y_j+x_jy_1)
	\right )
	\left (\prod_{i\in \brk{k}\setminus (I\cup \{1\})} W_i %(x_iY+y_iX)
	\right )
	\right ) \nonumber\\
	&\overset{by\ (\ref{term12})}{=}&
	X\sum_{\emptyset\ne I\subseteq \brk{k}\setminus \{1\}}
	\left ( (-1)^{|I|-1}
	\left (y_1+\sum_{i\in I}y_i\right )
	\left (\prod_{j\in I}
	w_j %(x_1y_j+y_1x_j)
	\right )
	\left (
	\prod_{i\in \brk{k}\setminus (I\cup \{1\})}
	W_i  %(x_iY+y_iX)
	\right )
	\right )
	\nonumber \\
	& &+\sum_{\emptyset\ne I\subseteq \brk{k}\setminus \{1\}}
	\left ( (-1)^{|I|-1}
	\left (x_1+\sum_{i\in I}x_i\right )
	\left (\sum_{i\in \brk{k}\setminus (I\cup \{1\})}y_i\right )
	\left (\prod_{j\in I}
	w_j %(x_1y_j+y_1x_j)
	\right )
	%\right. \nonumber \\ &\hspace{2 cm}\left.
	\left (
	\prod_{i\in \brk{k}\setminus (I\cup \{1\})}
	W_i % (x_iY+y_iX)
	\right )
	\right )
	\nonumber \\
	&=&-X\sum_{\emptyset\ne I\subseteq \brk{k}\setminus \{1\}}
	\left (
	\left (y_1+\sum_{i\in I}y_i\right )
	\left (\prod_{j\in I}
	(-w_j) %(-x_1y_j-y_1x_j)
	\right )
	\left (
	\prod_{i\in \brk{k}\setminus (I\cup \{1\})}
	W_i %(x_iY+y_iX)
	\right )
	\right )
	\nonumber \\
	& &-\sum_{\emptyset\ne I\subseteq \brk{k}\setminus \{1\}}
	\left (
	\left (x_1+\sum_{i\in I}x_i\right )
	\left (\sum_{i\in \brk{k}\setminus (I\cup \{1\})}y_i\right )
	\left (\prod_{j\in I}
	(-w_j) %(-x_1y_j-y_1x_j)
	\right )
	\left (
	\prod_{i\in \brk{k}\setminus (I\cup \{1\})}
	W_i %(x_iY+y_iX)
	\right )
	\right )
	\nonumber \\
	&\overset{\small by\ (\ref{le2-1-1ex}), (\ref{le2-2-1ex})}
	{=}&
	Xy_1\prod_{i=2}^kW_i %(x_iY+y_iX)
	-X\prod_{j=2}^k W'_j %(x_iY'+y_iX')
	\left (
	y_1-\sum_{i=2}^k
	\frac{y_i w_i %(x_1y_i+y_1x_i)
	}
	{W'_i} %{x_iY'+y_iX'}
	\right )
	%\nonumber \\&
	+x_1\left(\prod_{i=2}^kW_i\right) %(x_iY+y_iX)
	\left ( \sum_{j=2}^k y_j\right )
	\nonumber \\
	& &+\left ( \prod_{r=2}^k W'_r\right) %(x_iY'+y_iX')
	\left (
	\sum_{2\le i,j\le k\atop i\ne j}
	\frac{x_iy_j
		w_i W_j %	(x_1y_i+y_1x_i)(x_jY+y_jX)
	}{W'_iW'_j}
	%{(x_iY'+y_iX')(x_jY'+y_jX')}
	-x_1\sum_{i=2}^k
	\frac{y_i W_i}{W'_i}
	%\frac{y_i(x_iY+y_iX)}{x_iY'+y_iX'}
	\right )
	\nonumber \\
	&=&\prod_{i=1}^kW_i %(x_iY+y_iX)
	-x_1y_1\prod_{i=2}^k W_i %(x_iY+y_iX)
	-XZ
	\left (y_1-x_1Z_3-y_1Z_1\right )
	%\nonumber \\ &
	+Z\left (
		-x_1(YZ_1+XZ_3)+
	\sum_{2\le i,j\le k\atop i\ne j}
	\frac{x_iy_jw_i W_j
		%(x_1y_i+y_1x_i)(x_jY+y_jX)
	}
	{W'_i W'_j}
	%{(x_iY'+y_iX')(x_jY'+y_jX')}
	\right )
	\nonumber \\
	&=&\prod_{i=1}^k W_i %(x_iY+y_iX)
	-x_1y_1\prod_{i=2}^k
	W_i %(x_iY+y_iX)
	-Z\left (y_1X-y_1XZ_1+x_1YZ_1\right )
	%\nonumber \\ &
	+Z\sum_{2\le i,j\le k\atop i\ne j}
	\frac{x_iy_j
		w_i W_j %(x_1y_i+y_1x_i)(x_jY+y_jX)
	}
	{W'_i W'_j},
\end{eqnarray}
where in the second last equality, we combine $Xy_1\prod\limits_{i=2}^k W_i$
and $x_1\prod\limits_{i=2}^k W_i \left (\sum\limits_{j=2}^k y_j\right )$ to obtain
$\prod\limits_{i=1}^kW_i-x_1y_1\prod\limits_{i=2}^k W_i$.
\label{r2-7}

By applying identity (\ref{le2-2-1ex}),
we have
\begin{eqnarray}\label{eq3-63}
\Gamma_3&=&-\sum\limits_{\emptyset\ne I\subseteq \brk{k}\setminus \{1\}}
\left (
\left( W_1 %x_1Y+y_1X
+\sum\limits_{i\in I} W_i %(x_iY+y_iX)
\right)
\left(\sum\limits_{i\in \brk{k}\setminus (I\cup \{1\})}
\frac{x_iy_i}{W_i} %{x_iY+y_iX}
\right)
%\right. \nonumber \\ & \hspace{1.5cm}\cdot \left.
\left (\prod\limits_{j\in I}
(-w_j) %(-x_1y_j-x_jy_1)
\right )
\left(\prod\limits_{i\in \brk{k}\setminus (I\cup \{1\})} W_i %(x_iY+y_iX)
\right)
\right )
\nonumber \\
&\overset{by\ (\ref{le2-2-1ex})}{=}&
W_1 %(x_1Y+y_1X)
\left(\prod\limits_{i=2}^k
W_i %(x_iY+y_iX)
\right)
\left (\sum_{i=2}^k\frac{x_iy_i}
{W_i}\right ) %{x_iY+y_iX}
+\left(\prod_{i=2}^k
W'_i %(x_iY'+y_iX')
\right)
%\nonumber \\ & \cdot
\left ( -W_1 %-(x_1Y+y_1X)
\sum_{i=2}^k\frac{x_iy_i}
{W'_i}  %{x_iY'+y_iX'}
+\left. \sum_{2\le i,j\le k\atop i\ne j}
\frac{
	w_i W_i x_jy_j
	%(x_1y_i+y_1x_i)(x_iY+y_iX)x_jy_j
}
{W'_i W'_j
	%(x_iY'+y_iX')(x_jY'+y_jX')
}
\right )
\right.
\nonumber \\
&=&\left(\prod\limits_{i=1}^k
W_i %(x_iY+y_iX)
\right)
\left (\sum_{i=2}^k
\frac{x_iy_i}{W_i}\right ) %{x_iY+y_iX}
-W_1 %(x_1Y+y_1X)
ZZ_1
%\nonumber \\ &
+ Z\sum_{2\le i,j\le k\atop i\ne j}
\frac{w_iW_ix_jy_j}
{W'_i W'_j %(x_iY'+y_iX')(x_jY'+y_jX')
}.
\end{eqnarray}

Note that for any set $S$ of positive integers,
and real numbers $a_i,b_i$ for $i\in S$, we have
$$
\sum_{i,j\in S\atop i\ne j}(a_ib_j)
=\left (\sum_{i\in S}a_i\right )\cdot
\left( \sum_{i\in S}b_i \right )
-\sum_{i\in S}(a_ib_i).
$$
Thus,
\begin{eqnarray}\label{eq3-7-1}
& &\sum_{2\le i,j\le k\atop i\ne j}
\frac{x_iy_j w_iW_j}  %(x_1y_i+y_1x_i)(x_jY+y_jX)
{W'_i W'_j} %(x_iY'+y_iX')(x_jY'+y_jX')
-\sum_{2\le i,j\le k\atop i\ne j}
\frac{w_iW_ix_jy_j}   %(x_1y_i+y_1x_i)(x_iY+y_iX)x_jy_j
{W'_iW'_j} %(x_iY'+y_iX')(x_jY'+y_jX')
\nonumber \\
&=&\left( \sum_{i=2}^k
\frac{x_iw_i}  %x_i(x_1y_i+y_1x_i)
{W'_i}\right )    %x_iY'+y_iX'
\cdot
\left(\sum_{i=2}^k \frac{y_iW_i}
{W'_i}\right ) %x_iY'+y_iX'
-\sum_{i=2}^k \frac{x_iy_iw_iW_i}
{(W'_i)^2}
%-\sum_{i=2}^k
%\frac{x_iy_iw_iW_i}    %x_iy_i(x_1y_i+y_1x_i)(x_iY+y_iX)
%{W'_i^2}  %(x_iY'+y_iX')^2
%\nonumber \\ &
-\left(
\left (\sum_{i=2}^k
\frac{x_iy_i}{W'_i} \right )  %{x_iY'+y_iX'}
\cdot
\left (\sum_{i=2}^k
\frac{w_iW_i}{W'_i} \right ) %{(x_1y_i+y_1x_i)(x_iY+y_iX)}{x_iY'+y_iX'}
-\sum_{i=2}^k
\frac{x_iy_iw_iW_i} %{x_iy_i(x_1y_i+y_1x_i)(x_iY+y_iX)}
{(W'_i)^2}%{(x_iY'+y_iX')^2}
\right)
\nonumber \\
&=&(x_1Z_1+y_1Z_2)(Y'+y_1Z_1+x_1Z_3)
-Z_1(x_1Y'+y_1X'+x_1^2Z_3+y_1^2Z_2+2x_1y_1Z_1)
\nonumber \\
&=&y_1(Y'Z_2-X'Z_1)
+x_1y_1(Z_2Z_3-Z_1^2)
\nonumber \\
&=&y_1(Y'Z_2-X'Z_1)
+x_1y_1(1-2Z_1)
\nonumber \\
&=&y_1(Y'Z_2+x_1-X'Z_1-2x_1Z_1),
\end{eqnarray}
where the second last equality follows from
the fact that $(Z_1-1)^2=Z_2Z_3$ by Lemma~\ref{le2-01}.
Thus, by (\ref{eq3-61}), (\ref{eq3-63})
and (\ref{eq3-7-1}),
\begin{eqnarray}\label{eq3-12}
& &\Gamma_1-\Gamma_2-\Gamma_3
-\left(\prod_{i=1}^k(x_iY+y_iX)\right)
\left (1-\sum_{i=1}^k\frac{x_iy_i}{x_iY+y_iX}\right )
\nonumber \\
&=&-Z\left (y_1X-y_1XZ_1+x_1YZ_1\right )
+ZZ_1(x_1Y+y_1X)+Zy_1(Y'Z_2+x_1-X'Z_1-2x_1Z_1)
\nonumber \\
&=&y_1Z(X'Z_1+Y'Z_2-X')
\nonumber \\
&=&y_1Z
\left (-X'+\sum_{i=2}^k
\frac{x_i(y_iX'+x_iY')}{x_iY'+y_iX'}\right )
\nonumber \\
&=&0.
\end{eqnarray}
Thus, (\ref{e3-4}) follows from (\ref{eq3-50}), (\ref{eq3-12})
and the definition of $\phi(x_1,y_1,x_2,y_2,\ldots,x_k,y_k)$
in (\ref{e3-3}).
\end{proof}

\section{Counting spanning trees in a
special type of multigraphs}
\label{sec4}

Let $V=\{v_i: i\in \brk{k}\}$  and
$E=\{v_iv_j: i,j\in \brk{k}~\text{and}~i\neq j\}$ be the vertex set and edge set of
the complete graph $K_k$, where $k\ge 1$.
Let $\omega$ be a weight function on $E$.
If $\omega(v_iv_j)$ is a nonnegative integer
for all $i,j$ with $1\le i<j\le k$,
then $\tau(K_k, \omega)$ is the number of spanning trees
of the multigraph with vertex set
$\{u_1,u_2,\ldots,u_k\}$ which contains
exactly $\omega(v_iv_j)$ parallel edges
joining $u_i$ and $u_j$
for all $i,j$ with $1\le i<j\le k$.

For any non-empty subset
$I$ of $\brk{k}\setminus \{1\}$,
let $G_I$ denote the complete graph of order $k-|I|$
with vertex set $\{v_I\}\cup \{v_i: i\in \brk{k}\setminus (I\cup \{1\})\}$
and weight function $\omega_I$ on the edge set
of $G_I$ defined as follows:
\equ{weight-f}
{
\left \{
\begin{array}{ll}
\omega_I(v_iv_j)=\omega(v_iv_j),
&\forall i,j\in \brk{k}\setminus (I\cup \{1\}),i\ne j;\\
\omega_I(v_Iv_j)=\sum\limits_{r\in I\cup \{1\}}\omega(v_rv_j),
\qquad
&\forall j\in \brk{k}\setminus (I\cup \{1\}).
\end{array}
\right.
}
Note that $G_I$ is actually the graph obtained from $K_k$ by identifying
all vertices in $\{v_i: i\in
I\cup \{1\}\}$ as one vertex.

By the inclusion-exclusion principle,
the following recursive relation on $\tau(K_k,\omega)$
can be obtained.\label{r2-9}

\begin{lemma}\label{le4-1}
For any weight function $\omega$ on the edge set $E$ of $K_k$, \label{r2-10}
\begin{equation}\label{le4-1-e1}
\tau(K_k,\omega)
=\sum_{\emptyset \ne I\subseteq \brk{k}\setminus \{1\}}
\left ( (-1)^{|I|-1}
\tau(G_I, \omega_I)
\prod_{i\in I}\omega(v_1v_i)\right ).
\end{equation}
\end{lemma}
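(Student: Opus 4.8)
The plan is to prove Lemma~\ref{le4-1} by a direct inclusion–exclusion argument on the presence of the edges $v_1v_i$ in a spanning tree of the multigraph represented by $(K_k,\omega)$. First I would set up notation: since $\omega$ is nonnegative-integer-valued, $\tau(K_k,\omega)$ counts spanning trees of the multigraph $H$ on vertices $u_1,\dots,u_k$ with $\omega(v_iv_j)$ parallel edges between $u_i$ and $u_j$. For each $i\in\brk{k}\setminus\{1\}$, let $A_i$ denote the set of spanning trees of $H$ that use \emph{no} edge between $u_1$ and $u_i$. The spanning trees of $H$ that avoid \emph{all} edges at $u_1$ cannot exist (a spanning tree must connect $u_1$), so a cleaner route is to count, via inclusion–exclusion, spanning trees in which the set of neighbours of $u_1$ (at the level of original vertices $u_i$) is constrained; but the slickest formulation is the following contraction identity, which I would prove first.

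The key step is the claim: for any fixed non-empty $I\subseteq\brk{k}\setminus\{1\}$, the quantity
\[
\tau(G_I,\omega_I)\prod_{i\in I}\omega(v_1v_i)
\]
equals the number of pairs $(T,\sigma)$, where $T$ is a spanning tree of $H$ and $\sigma$ chooses, for each $i\in I$, one of the $\omega(v_1v_i)$ parallel $u_1u_i$-edges together with the requirement that $T$ contains all the chosen edges — equivalently, it counts spanning trees of the multigraph $H$ with a marked edge from $u_1$ to $u_i$ present for every $i\in I$, weighted by the number of such markings. Indeed, contracting in $H$ all chosen edges $u_1u_i$ ($i\in I$) identifies $\{u_i:i\in I\cup\{1\}\}$ into a single vertex and produces exactly the multigraph described by $(G_I,\omega_I)$ (the additivity of parallel-edge counts under identification is precisely the second line of (\ref{weight-f})), and contracting a forest in a graph is a bijection between spanning trees containing that forest and spanning trees of the contracted graph. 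Summing over all $\binom{?}{?}$ choices of which parallel edges are marked contributes the factor $\prod_{i\in I}\omega(v_1v_i)$.

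Then I would finish by inclusion–exclusion over the multiset of $u_1$-edges used by a spanning tree. Fix a spanning tree $T$ of $H$ and let $J=J(T)\subseteq\brk{k}\setminus\{1\}$ be the (non-empty) set of indices $i$ such that $T$ uses at least one $u_1u_i$-edge; since $T$ is a tree, it uses exactly one such edge for each $i\in J$, so $T$ is counted with multiplicity $1$ in the pair-count above for a given $I$ precisely when $I\subseteq J$, and with multiplicity $0$ otherwise. Hence the right-hand side of (\ref{le4-1-e1}) counts each $T$ with total weight $\sum_{\emptyset\ne I\subseteq J}(-1)^{|I|-1}=1-\sum_{I\subseteq J}(-1)^{|I|}=1-(1-1)^{|J|}=1$, using $J\neq\emptyset$. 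Therefore the right-hand side equals $\sum_{T\in\T(H)}1=\tau(K_k,\omega)$, which is (\ref{le4-1-e1}).

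The main obstacle I anticipate is making the pair-counting in the key step fully rigorous, i.e.\ being careful that "marking one parallel $u_1u_i$-edge and requiring $T$ to contain it" is genuinely in bijection (after contraction) with spanning trees of the multigraph $(G_I,\omega_I)$, including the bookkeeping of parallel edges not incident to the contracted vertex (handled by the first line of (\ref{weight-f})) and the possibility that $T$ uses further $u_1$-edges to indices outside $I$ (these survive contraction and are exactly the edges $\omega_I(v_Iv_j)$ with $j\notin I\cup\{1\}$). Once that correspondence is stated precisely, the sign cancellation is routine. It is also worth remarking that the identity is polynomial in the entries $\omega(v_iv_j)$, so although the combinatorial proof assumes nonnegative integer weights, (\ref{le4-1-e1}) holds for arbitrary real weight functions $\omega$ — a point I would record at the end since later sections apply it with real weights.
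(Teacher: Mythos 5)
Your proof is correct and follows essentially the same route as the paper's: inclusion--exclusion over the set of indices $i$ for which the tree is forced to use an edge between $v_1$ and $v_i$, combined with the contraction bijection identifying such trees with spanning trees of $(G_I,\omega_I)$ weighted by $\prod_{i\in I}\omega(v_1v_i)$. The only (cosmetic) difference is that you argue on the multigraph with nonnegative integer multiplicities and then extend to real weights by polynomiality, whereas the paper runs the same inclusion--exclusion directly on the weighted simple graph $K_k$, which covers arbitrary real $\omega$ at once.
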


\def \C {{\cal C}}
\def \F {{\cal F}}

\begin{proof} For any $i\in \brk{k}\setminus \{1\}$, let $A_i$
denote the set of members $T$ in $\mathcal{T}(K_k)$ with
$v_1v_i\in E(T)$.
Clearly,
\equ{le4-1-e2}
{
\mathcal{T}(K_k)=\bigcup_{i=2}^k A_i.
}
For any $T\in \mathcal{T}(K_k)$, by the inclusion-exclusion principle,
\equ{le4-1-e3}
{
\sum_{\emptyset\ne I\subseteq \brk{k}\setminus \{1\}}
\left ((-1)^{|I|-1}|\{T\}\cap \bigcap_{i\in I} A_i|
\right )=1.
}
Thus,
\equ{le4-1-e4}
{
\tau(K_k,\omega)=\sum_{T\in \bigcup_{2\le i\le k}A_i}
\omega(T)
=\sum_{\emptyset \ne I\subseteq \brk{k}\setminus \{1\}}
\left ( (-1)^{|I|-1}
\sum_{T\in \bigcap_{i\in I} A_i}
\omega(T)\right ).
}
Let $I$ be  any non-empty subset of $\brk{k}\setminus \{1\}$ and
let $H_I$ denote the multiple graph
obtained from $K_k$ by
identifying all vertices
in the set $\{v_i: i\in I\cup \{1\}\}$
and removing all loops produced.
The vertex set of $H_I$ is
$(V(K_k)\setminus \{v_i:i\in I\cup \{1\}\})\cup v_I$.
Clearly, $H_I$ includes each edge
$v_iv_j$, where
$i,j\in \brk{k}\setminus (I\cup \{1\})$,
while each edge $v_iv_j$ in $K_k$,
where $i\in I\cup \{1\}$ and
$j\in \brk{k}\setminus (I\cup \{1\})$,
is changed to an edge of $H_I$
joining $v_I$ and $v_j$.
There are exactly $1+|I|$
parallel edges
in $H_I$ joining $v_I$ and $v_j$
for each $j\in \brk{k}\setminus (I\cup \{1\})$.
The weight function on $E(H_I)$
is the restriction of $\omega$ to $E(H_I)$ and parallel edges in
$H_I$ may have different weights.

For each $T\in \bigcap_{i\in I} A_i$,
let $T_I$ be the tree obtained from $T$ by identifying all vertices
in the set $\{v_i: i\in I\cup \{1\}\}$.
Clearly $\omega(T)$ and $\omega(T_I)$
have the following relation:
\equ{le4-1-e7}
{
\omega(T)=\prod_{i\in I}\omega(v_1v_i)
\cdot
\omega(T_I).
}
Moreover,
$T\rightarrow T_I$ is a bijection
 from
$\bigcap_{i\in I} A_i$ to
$\mathcal{T}(H_I)$, implying that
\equ{le4-1-e5}
{
\sum_{T\in \bigcap_{i\in I} A_i}\omega(T)
=\prod_{i\in I}\omega(v_1v_i)
\cdot \sum_{T\in \bigcap_{i\in I} A_i}\omega(T_I)
=\tau(H_I, \omega)\cdot
\prod_{i\in I}\omega(v_1v_i).
}
Note that $G_T$ can be obtained from $H_I$ by merging all parallel edges
with ends $v_I$ and $v_j$ into one
for each
$j\in \brk{k}\setminus (I\cup \{1\})$.
By the definition of $\omega_I$,
$\tau(H_I, \omega)
=\tau(G_I, \omega_I)$.
Thus, (\ref{le4-1-e1}) follows from (\ref{le4-1-e4}) and (\ref{le4-1-e5}).
\label{r1-2}
\end{proof}

Recall the function $\phi$ defined in the previous
section. In the following, we shall show that
$\tau(K_k, \omega)$ can be expressed in terms of
$\phi$ when $\omega$ satisfies certain conditions.\label{r2-13}

\begin{theorem}\label{th3}
Let $V=\{v_1,v_2,\ldots,v_k\}$ be the
the vertex set of the complete graph $K_k$, where $k\ge 1$,
and $\omega$ be a weight function on the edge set $E$ of $K_k$.
If there exist $2k$ real numbers $x_1,x_2,\ldots,x_k$
and $y_1,y_2,\ldots,y_k$ such that
$\omega(v_iv_j)=x_iy_j+x_jy_i$ holds
for every pair $i$ and $j$ with $1\le i<j\le k$,
then,
\begin{equation}\label{th3-e1}
\tau(K_k, \omega)
=\phi(x_1,y_1,x_2,y_2,\ldots,x_k,y_k).
\end{equation}
\end{theorem}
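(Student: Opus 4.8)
The plan is to prove the identity by induction on $k$, feeding the inclusion–exclusion recursion of Lemma~\ref{le4-1} into the algebraic identity of Theorem~\ref{th3-1}. For the base case $k=1$, the graph $K_1$ has exactly one spanning tree, so $\tau(K_1,\omega)=1$, which agrees with $\phi(x_1,y_1)=1$ recorded in (\ref{phi}). (Equivalently one may start from $k=2$, where $\tau(K_2,\omega)=\omega(v_1v_2)=x_1y_2+x_2y_1=\phi(x_1,y_1,x_2,y_2)$.)

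For the inductive step, fix $k\ge 2$ and assume (\ref{th3-e1}) holds for every complete graph on fewer than $k$ vertices whose weight function has the prescribed bilinear shape. Applying Lemma~\ref{le4-1},
$$\tau(K_k,\omega)=\sum_{\emptyset\ne I\subseteq\brk{k}\setminus\{1\}}(-1)^{|I|-1}\,\tau(G_I,\omega_I)\prod_{i\in I}\omega(v_1v_i),$$
and here $\prod_{i\in I}\omega(v_1v_i)=\prod_{i\in I}(x_1y_i+x_iy_1)$, which already matches the coefficient in (\ref{e3-4}). The crux is to verify that each contracted weight function $\omega_I$ is again of the required form. Assign the pair $(x_I,y_I)$ of (\ref{e3-2}) to the identified vertex $v_I$ and the original pair $(x_j,y_j)$ to each $v_j$ with $j\in\brk{k}\setminus(I\cup\{1\})$. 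For $i,j\in\brk{k}\setminus(I\cup\{1\})$ with $i\ne j$ we have $\omega_I(v_iv_j)=\omega(v_iv_j)=x_iy_j+x_jy_i$, and for $j\in\brk{k}\setminus(I\cup\{1\})$,
$$\omega_I(v_Iv_j)=\sum_{r\in I\cup\{1\}}\omega(v_rv_j)=\sum_{r\in I\cup\{1\}}(x_ry_j+x_jy_r)=x_Iy_j+x_jy_I,$$
using $x_I=x_1+\sum_{i\in I}x_i$ and $y_I=y_1+\sum_{i\in I}y_i$. Since $G_I$ has $k-|I|<k$ vertices (and at least one, the case $|I|=k-1$ being the base case $G_I=K_1$), the induction hypothesis yields
$$\tau(G_I,\omega_I)=\phi\Big(x_I,y_I,\underbrace{x_s,y_s}_{\forall s\in\brk{k}\setminus(I\cup\{1\})}\Big);$$
here one uses that $\phi$ is symmetric in its pairs of arguments, which is immediate from (\ref{e3-3}) since $X$, $Y$ and the displayed product and sum range over all indices, so the order in which the pairs are listed is irrelevant.

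Substituting this back into the expansion of $\tau(K_k,\omega)$ produces exactly the right-hand side of (\ref{e3-4}), and Theorem~\ref{th3-1} identifies it with $\phi(x_1,y_1,x_2,y_2,\ldots,x_k,y_k)$, completing the induction. The bulk of the difficulty has been offloaded onto Theorem~\ref{th3-1} and Lemma~\ref{le4-1}; the one genuinely new observation is that identifying vertices preserves the bilinear form $x_iy_j+x_jy_i$ of the edge weights, with the merged vertex inheriting the coordinate sums — precisely the structure that forces the combinatorial identity (\ref{e3-4}) and makes the induction close up. I expect any remaining friction to be purely notational: keeping the index set $\brk{k}\setminus(I\cup\{1\})$ aligned on both sides, and checking that the base case is absorbed correctly when $|I|=k-1$.
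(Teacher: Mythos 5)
Your proof is correct and follows essentially the same route as the paper: induction on $k$ with base case handled directly, the inclusion--exclusion recursion of Lemma~\ref{le4-1}, the observation that the contracted weights $\omega_I$ retain the bilinear form with the merged vertex carrying the coordinate sums $(x_I,y_I)$, and Theorem~\ref{th3-1} to close the induction. The only cosmetic difference is that the paper verifies $k\le 3$ explicitly as base cases while you start from $k=1$, which is equally valid since Theorem~\ref{th3-1} holds for all $k\ge 2$.
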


\begin{proof} Note that for
 $1\le k\le 3$,
\equ{eq4-0}
{
\tau(K_k, \omega)=
\left \{
\begin{array}{ll}
1,  &k=1;\\
\omega(v_1v_2), &k=2;\\
\omega(v_1v_2)\omega(v_1v_3)
+\omega(v_1v_2)\omega(v_2v_3)
+\omega(v_1v_3)\omega(v_2v_3), &k=3.
\end{array}
\right.
}
As $\omega(v_iv_j)=x_iy_j+x_jy_i$,
(\ref{th3-e1}) follows from (\ref{phi}) and
(\ref{eq4-0}) when $k\le 3$.

Assume that the result holds for $k\le N$, where $N\ge 3$.
In the following, we assume that $k=N+1$
and show that it holds in this case by induction.\label{r2-15}

%For convenience, let $G$ denote $K_k$.
Recall that for any non-empty subset of $I$ of $\brk{k}\setminus \{1\}$,
$G_I$ is the complete graph
with vertex set $\{v_I\}\cup \{v_j: j\in \brk{k}\setminus (I\cup \{1\})\}$ and
weight function $\omega_I$ on its edge set
defined in (\ref{weight-f}), i.e.,
% defined as follows:
\equ{weight-f2}
{
\left \{
\begin{array}{ll}
\omega_I(v_iv_j)=\omega(v_iv_j)=x_iy_j+x_jy_i,
&\forall i,j\in \brk{k}\setminus (I\cup \{1\}),i\ne j;\\
\omega_I(v_Iv_j)
%=\sum\limits_{r\in I\cup \{1\}}\omega(v_rv_j)
=\sum\limits_{r\in I\cup \{1\}}(x_ry_j+x_jy_r)
=x_Iy_j+y_Ix_j,
\quad
&\forall j\in \brk{k}\setminus (I\cup \{1\}).
\end{array}
\right.
}
where $x_I=x_1+\sum\limits_{r\in I}x_r$
and $y_I=y_1+\sum\limits_{r\in I}y_r$.

As $G_I$ is a complete graph of order $k-|I|<k$
with a weight function $\omega_I$ satisfying conditions
in (\ref{weight-f2}),
by inductive assumption,
(\ref{th3-e1}) holds for $G_I$, i.e.,
\begin{equation}\label{th3-e5}
\tau(G_I, \omega_I)
=\phi(x_I,y_I, \underbrace{x_i,y_i}
_{\forall i\in \brk{k}\setminus (I\cup \{1\})}).
\end{equation}

By Lemma~\ref{le4-1}, (\ref{th3-e5}) and Theorem~\ref{th3-1},
\eqn{th3-e6}
{
\tau(K_k, \omega)
&=&\sum_{\emptyset\ne I\subseteq \brk{k}\setminus \{1\}} (-1)^{|I|-1}
\tau(G_I, \omega_I)
\prod_{i\in I}\omega(v_1v_i)
\nonumber \\
&=&\sum_{\emptyset \ne I\subseteq \brk{k}\setminus \{1\}}
\left ( (-1)^{|I|-1}
\tau(G_I, \omega_I)
\prod_{i\in I}(x_iy_1+y_ix_1)
\right )
\nonumber \\
&=&\sum_{\emptyset \ne I\subseteq \brk{k}\setminus \{1\}}
\left ( (-1)^{|I|-1}
\prod_{i\in I}(x_iy_1+y_ix_1)
\cdot
\phi(x_I,y_I, \underbrace{x_i,y_i}
_{\forall i\in \brk{k}\setminus (I\cup \{1\})})
\right )
\nonumber \\
&=&\phi(x_1,y_1,x_2,y_2,\cdots,x_k,y_k).
}
Hence the result holds.
\end{proof}

\section{Spanning trees in $K_{m, n}$ containing a spanning forest
$F$}
\label{sec5}

Now we are ready to prove the main result.

\noindent
{\bf Proof of Theorem \ref{mainthm}.}
For any spanning forest $F$ of $K_{m,n}$
with components $T_1,T_2,\ldots,T_k$,
observe that
\equ{eq5-1}
{
\tau_{F}(K_{m,n})=\tau(K_{m,n}/F),
}
where $K_{m,n}$ is unweighted and $K_{m,n}/F$ is \label{r2-16}
the multigraph obtained from $K_{m,n}$
by contracting all edges in $F$.
Note that $K_{m,n}/F$ is a multigraph of order $k$
whose vertices correspond to components of $F$,
as $K_{m,n}/F$ can also be obtained from $K_{m,n}$
by identifying all vertices in $T_i$
for all $i\in \brk{k}$, and removing all loops.
Thus, we may assume that $K_{m,n}/F$ has vertices
$v_1,v_2,\ldots,v_k$ such that the number of parallel
edges joining $v_i$ and $v_j$ is equal to
the number of edges in $K_{m,n}$
with one end in $T_i$ and the other end in $T_j$.

As $|X\cap V(T_s)|=m_s$ and $|Y\cap V(T_s)|=n_s$
for all $s=1,2,\ldots,k$,
$K_{m,n}/F$ contains exactly $m_in_j+m_jn_i$ parallel
edges joining $v_i$ and $v_j$
for all $1\le i<j\le k$.
By Theorem~\ref{th3},
\equ{eq5-4}
{
\tau(K_{m,n}/F)=
\phi(m_1,n_1,m_2,n_2,\ldots,m_k,n_k).
}
Thus, by (\ref{eq5-1}) and the definition of $\phi(x_1,y_1,x_2,y_2,\ldots,x_k,y_k)$
in (\ref{e3-3}), the result holds.
{\hfill$\Box$}

\section{Remarks}
\label{sec6}

Another approach for proving the main result is to
establish results analogue to Lemma~\ref{le4-1} and
Theorem~\ref{th3-1}.
The following identity analogue to Lemma~\ref{le4-1}
can be obtained easily: %has an analogous result:
\begin{equation}\label{e6-1}
\tau(K_k, \omega)
=\sum_{\emptyset \ne I\subseteq \brk{k}\setminus \{1\}}
\left (
\tau(G_I, \omega'_I)
\prod_{i\in I}\omega(v_1v_i)\right ),
\end{equation}
where $\omega'_I$ is different from
$\omega_I$ defined in (\ref{weight-f}), as
for any $j\in \brk{k}\setminus (I\cup \{1\})$,
\equ{e6-2}
{
\omega'_I(v_Iv_j)=\sum_{r\in I}\omega(v_rv_j)
=\omega_I(v_Iv_j)-\omega(v_1v_j),
}
although $\omega'_I(v_iv_j)=\omega_I(v_iv_j)$
for all $i,j\in \brk{k}\setminus (I\cup \{1\})$ with $i\ne j$.

By Theorem~\ref{mainthm} and (\ref{e6-1}),
the following identity analogue to
Theorem~\ref{th3-1} holds:
\begin{equation} \label{e6-3}
\phi(x_1,y_1,x_2,y_2,\cdots,x_k,y_k)
=
\sum_{\emptyset\ne I\subseteq \brk{k}\setminus \{1\}}
\left (
\prod_{j\in I}(x_1y_j+x_jy_1)
\phi(x'_I, y'_I, \underbrace{x_s,y_s}
_{\forall s\in \brk{k}\setminus (I\cup \{1\})})
\right ),
\end{equation}
where $x'_I=\sum\limits_{i\in I}x_i=x_I-x_1$
and $y'_I=\sum\limits_{i\in I}y_i=y_I-y_1$.

However, it is quite challenging to prove (\ref{e6-3})
directly.
Note that for any $I$ with $\emptyset \ne I\subseteq \brk{k}\setminus \{1\}$,
\equ{e6-4}
{
\left \{
\begin{array}{l}
x'_I+\sum_{i\in I}x_i=x_2+x_3+\cdots+x_k=X-x_1,\\
y'_I+\sum_{i\in I}y_i=y_2+y_3+\cdots+y_k=Y-x_1.
\end{array}
\right.
}
By the definition of the function $\phi$,
\aln{e6-5}
{
\phi(x'_I, y'_I, \underbrace{x_s,y_s}
_{\forall s\in \brk{k}\setminus (I\cup \{1\})} )
&=\frac{x'_IY'+y'_IX'}{X'Y'}\cdot
\left(\prod_{i\in \brk{k}\setminus (I\cup \{1\})}(x_iY'+y_iX')\right)
\nonumber \\
& \cdot
\left (1 -\frac{x'_Iy'_I}{x'_IY'+y_I'X'}
-\sum_{i\in \brk{k}\setminus (I\cup \{1\})}\frac{x_iy_i}
{x_iY'+y_iX'}\right ),
}
where $X'=X-x_1$ and $Y'=Y-y_1$.
Observe that
the left-hand of (\ref{e6-3}) has a denominator $XY$,
while its right-hand side has a denominator $X'Y'$.

Clearly, the main result (i.e., Theorem~\ref{mainthm})
also follows from (\ref{e6-1}) and
(\ref{e6-3}).

In the end, we propose some problems.

\begin{problem}\label{prob1}
Find a bijective proof for Theorem~\ref{mainthm}.
\end{problem}

Another problem is to extend Theorem~\ref{mainthm} to complete
$k$-partite graphs, where $k\ge 3$.

\begin{problem}\label{prob2}
Let $K_{n_1,n_2,\cdots,n_k}$ be a complete $k$-partite graph
and $F$ be a spanning forest in $K_{n_1,n_2,\ldots,n_k}$,
where $k\ge 3$.
Find a formula for counting the number of spanning trees in
$K_{n_1,n_2,\ldots,n_k}$ which contain all edges in $F$.
\end{problem}

For $k=3$, we propose the following conjecture for
a lower bound of $\tau_F(K_{n_1,n_2,n_3})$.

\begin{conjecture}\label{con1}
Let $X_1,X_2$ and $X_3$ be the partite sets of
the complete tripartite graph
$K_{n_1,n_2,n_3}$,  where $|X_i|=n_i$ for $i\in \brk{3}$.
For any spanning forest $F$ in $K_{n_1,n_2,n_3}$
with $k$ components $T_1,T_2,\cdots,T_k$,
\eqn{con-e1}
{
\tau_F(K_{n_1,n_2,n_3})
&\ge &
\frac 1{n_1n_2+n_1n_3+n_2n_3}
\left (
\prod_{i=1}^{k}\left(
(n-n_1)n_{1,i}+(n-n_2)n_{2,i}+(n-n_3)n_{3,i} \right )
\right )
\nonumber \\
& &\cdot \left (
1-\sum_{i=1}^{k}
\frac {n_{1,i}n_{2,i}+n_{1,i}n_{3,i}+n_{2,i}n_{3,i}}
{(n-n_1)n_{1,i}+(n-n_2)n_{2,i}+(n-n_3)n_{3,i}}
\right ),
}
where $n=n_1+n_2+n_3$ and $n_{s,i}=|X_s\cap V(T_i)|$ for
$s=1,2,3$ and $i\in \brk{k}$.
\end{conjecture}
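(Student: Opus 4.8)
The plan is to follow the route that proved Theorem~\ref{mainthm}, replacing the exact identity of Theorem~\ref{th3-1} by an inequality. First I would contract $F$: exactly as in~(\ref{eq5-1}), $\tau_F(K_{n_1,n_2,n_3})=\tau(K_k,\omega)$, where $K_k$ carries one vertex $v_i$ per component $T_i$ and the weight counting the edges of $K_{n_1,n_2,n_3}$ between $T_i$ and $T_j$ is
\[
\omega(v_iv_j)=\sum_{1\le s<t\le 3}\bigl(n_{s,i}n_{t,j}+n_{t,i}n_{s,j}\bigr)=N_iN_j-\sum_{s=1}^{3}n_{s,i}n_{s,j},
\]
with $N_i=n_{1,i}+n_{2,i}+n_{3,i}$. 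The bipartite miracle was that this weight had the rank-two shape $x_iy_j+x_jy_i$, which let Theorem~\ref{th3} identify $\tau$ with $\phi$; the tripartite weight is instead $N_iN_j$ minus a three-term dot product, a rank-three symmetric form, and it is precisely this extra rank that should turn the equality of Theorem~\ref{mainthm} into the conjectured inequality.

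Next I would evaluate $\tau(K_k,\omega)$ by the weighted Matrix--Tree Theorem. Writing $M=(n_{s,i})$ for the $k\times 3$ composition matrix, $\mathbf N=M\mathbf 1_3$, and $P=\mathrm{diag}(p_i)$ with $p_i=\sum_{j\ne i}\omega(v_iv_j)=nN_i-\sum_s n_s n_{s,i}$ (so $p_i$ is exactly the per-component denominator appearing in~(\ref{con-e1})), the Laplacian of $(K_k,\omega)$ is $L=P+MM^{T}-\mathbf N\mathbf N^{T}$ and $\tau(K_k,\omega)=\tfrac1k\det\bigl(L+\tfrac1k J_k\bigr)$. Since $MM^{T}-\mathbf N\mathbf N^{T}=M(I_3-\mathbf 1_3\mathbf 1_3^{T})M^{T}$ has exactly one negative eigendirection, I would isolate it: with $B=P+M\bigl(I_3-\tfrac13\mathbf 1_3\mathbf 1_3^{T}\bigr)M^{T}+\tfrac1k J_k\succ 0$ one has $L+\tfrac1k J_k=B-\tfrac23\mathbf N\mathbf N^{T}$, so the matrix determinant lemma gives $\tau(K_k,\omega)=\tfrac1k\det(B)\bigl(1-\tfrac23\,\mathbf N^{T}B^{-1}\mathbf N\bigr)$. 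As the right-hand side $\Phi$ of~(\ref{con-e1}) is built from the same data $p_i$, $q_i=\sum_{s<t}n_{s,i}n_{t,i}$ and $D=\sum_{s<t}n_sn_t$, the target collapses to the scalar inequality $\tfrac1k\det(B)\bigl(1-\tfrac23\mathbf N^{T}B^{-1}\mathbf N\bigr)\ge\Phi$.

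The heart of the matter lies where the bipartite argument invoked Lemma~\ref{le2-01}. In the proof of Theorem~\ref{th3-1} the decisive step~(\ref{eq3-7-1}) replaced $Z_2Z_3-Z_1^2$ by $1-2Z_1$ using the \emph{equality} $(Z_1-1)^2=Z_2Z_3$, which is the degenerate, tight case of the Cauchy--Schwarz-type identity in Lemma~\ref{le2-01}. For three parts the corresponding quantities split into three dot-product sums, and the analogous Cauchy--Schwarz relation becomes a genuine \emph{inequality}; its (nonnegative) slack is what I expect to account for the gap $\tau-\Phi\ge 0$. Concretely, I would expand $\det(B)$ through the rank-two update $M(I_3-\tfrac13\mathbf 1_3\mathbf 1_3^{T})M^{T}$ (a twofold matrix-determinant-lemma expansion), equivalently expand $\tau(K_k,\omega)=\sum_{T}\prod_{e\in T}\omega(e)$ and group the tree edges according to which pair of classes $(s,t)$ supplies each of them: the terms that survive when the three classes are forced to behave like two should reassemble $\Phi$ via Theorem~\ref{th3}, while every remaining term is a product of the nonnegative weights $n_{s,i}n_{t,j}+n_{t,i}n_{s,j}$ and is therefore manifestly $\ge 0$.

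The main obstacle is twofold. First, the inclusion--exclusion recursion of Lemma~\ref{le4-1} carries the alternating signs $(-1)^{|I|-1}$, so one cannot simply push an inductive inequality $\tau(G_I,\omega_I)\ge\Phi_I$ through~(\ref{le4-1-e1}); the sign pattern must be tamed, most plausibly by reorganizing the expansion into the manifestly nonnegative spanning-tree sum before comparing with $\Phi$. Second, and more seriously, one must show that the Cauchy--Schwarz slack is not merely nonnegative but dominates the exact combination in $\Phi$, i.e.\ that $\Phi$ really is the rank-two shadow of the rank-three form and nothing larger. I would attack this by first confirming equality for $k\le 2$ (a short computation does give $\tau=\Phi$ there) and for matchings, and then trying to prove by induction on $k$ that the residue $\tau(K_k,\omega)-\Phi$ is a sum over pairs $i\ne j$ of $\bigl(n_{1,i}n_{2,j}+\cdots\bigr)$-type products with nonnegative coefficients; exhibiting this residue as a sum of squares, or as a nonnegative Schur-type expression in the $n_{s,i}$, would finish the proof. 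The same scheme applied to $K_{n_1,\dots,n_r}$ would then reduce to an $r$-fold dot product with an $(r-1)$-dimensional positive part, the slack — and hence the expected strictness of the bound — growing with $r$.
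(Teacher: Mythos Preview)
The paper does not prove this statement: it is posed as Conjecture~\ref{con1}, and the only thing the paper says about it is that ``it is trivial to verify that the equality of~(\ref{con-e1}) holds for $k\le 2$.'' There is therefore no paper proof to compare your proposal against.

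Your proposal is not a proof either; it is a research plan, and you say so yourself. The preliminary steps are fine: the contraction $\tau_F(K_{n_1,n_2,n_3})=\tau(K_k,\omega)$ with $\omega(v_iv_j)=N_iN_j-\sum_s n_{s,i}n_{s,j}$ is correct, your Laplacian and $p_i$ are computed correctly, and the observation that the tripartite weight is a rank-three rather than rank-two symmetric form is exactly the reason Theorem~\ref{th3} no longer gives an equality. But the decisive step is missing, and you flag it honestly: you have no argument that the right-hand side $\Phi$ of~(\ref{con-e1}) is precisely the ``rank-two shadow'' of $\tau(K_k,\omega)$, nor that the residue $\tau-\Phi$ is a nonnegative combination of the tree weights. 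The analogy with Lemma~\ref{le2-01} is suggestive but not a proof: in the bipartite case that lemma supplied an \emph{identity} which was then fed into a long algebraic computation (the whole of Section~\ref{sec3}); replacing the identity by a Cauchy--Schwarz inequality does not automatically propagate through the signed recursion of Lemma~\ref{le4-1}, as you yourself note. Likewise, the claim that grouping spanning-tree edges by class-pair makes the ``two-class'' terms reassemble exactly into $\Phi$ via Theorem~\ref{th3} is asserted but not demonstrated---$\Phi$ is not literally of the form $\phi(x_1,y_1,\ldots,x_k,y_k)$ for any obvious choice of $x_i,y_i$, so this identification requires work you have not done. Until that step is carried out, what you have is a plausible strategy for attacking an open conjecture, not a proof.
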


It is trivial to verify that
the equality of (\ref{con-e1}) holds for $k\le 2$.

\section*{Acknowledgements}

Jun Ge is supported by NSFC (No. 11701401) and
the joint research project of Laurent Mathematics Center of Sichuan Normal University
and National-Local Joint Engineering Laboratory of System Credibility Automatic Verification.
The authors are grateful to two anonymous referees for their careful examination and constructive comments.

\end{document}